\theoremstyle{definition}
\newtheorem{definition}{Definition}[section]
\newaliascnt{lemma}{definition}
\newaliascnt{theorem}{definition}
\newaliascnt{corollary}{definition}
\newaliascnt{proposition}{definition}
\newaliascnt{remark}{definition}
\newaliascnt{example}{definition}
\newaliascnt{conjecture}{definition}
\newtheoremstyle{remark2}{}{}{}{}{\bfseries}{.}{ }{}
\theoremstyle{remark2}
\newenvironment{remark}
  {\pushQED{\qed}\remarkx}
  {\popQED\endremarkx}
\newenvironment{example}
  {\pushQED{\qed}\examplex}
  {\popQED\endexamplex}
\theoremstyle{plain}
\newtheorem{lemma}[lemma]{Lemma}
\newtheorem{theorem}[theorem]{Theorem}
\newtheorem{proposition}[proposition]{Proposition}
\newcommand{\R}{\ensuremath{\mathbb{R}}}
\newcommand{\N}{\ensuremath{\mathbb{N}}}
\newcommand{\Z}{\ensuremath{\mathbb{Z}}}
\newcommand{\abs}[1]{\left|{#1}\right|} 
\newcommand{\norm}[2][]{\left\|#2\right\|_{#1}} 
\renewcommand{\S}{\ensuremath{\mathbb{S}}}
\newcommand{\T}{\ensuremath{\mathbb{T}}}
\DeclareMathOperator{\supp}{supp}
\title{A space-time relaxation for $L^1$ optimal control problems}
\author{Malte Kampschulte\footnote{Department of mathematical analysis, Faculty of mathematics and physics, Charles University Prague. Email: \nolinkurl{kampschulte@karlin.mff.cuni.cz}}}
\begin{document}
\maketitle
\begin{abstract}
 We introduce a vertical type relaxation for optimal control problems which only have $L^1$-coercivity for their controls. Usually such problems feature both concentration and oscillation effects at the same time. We propose relaxing to an associated problem in space-time, where the controls can be considered bounded in $L^\infty$, greatly simplifying any analysis. In this relaxation, concentrations are transformed into vertical parts and oscillations can be dealt with using Young-measures. This technique can be extended to similar problems on infinite-dimensional spaces.
\end{abstract}

\section{Introduction}

Consider an optimal control problem of the general type
\begin{align*}
 \text{Minimize } & \mathcal{F}(y,u) := \int_0^T f(t,y(t),u(t)) dt + g(y(T))\\
 \text{where } & \dot{y}(t) = A(t,y(t),u(t)) \quad \forall t\in [0,T]\\
 &y(0) = y_0
\end{align*}
where $f$ is growing asymptotically linear in $u$, that is $\lim_{\lambda\to\infty} \frac{f(t,y,\lambda u)}{\lambda u} = c(t,y,u)$ with $0 < c_{\text{min}} < c(t,y,u) < c_{\text{max}} < \infty$ for $\abs{u}=1$. In this case an obvious choice of space to work with is $u\in L^1([0,T];\R^k)$ and indeed a minimizing sequence $u_k$ will be bounded and coercive in $L^1$. However, as $L^1$ is not reflexive, we will not get a weakly converging subsequence in $L^1$. Instead there will be concentrations and as a result, the class $L^1([0,T];\R^k)$ does not have to have a minimizer. In fact there might be different concentrations at the same time $t$ which do not cancel due to the nonlinearities of the problem. 

All this shows the need for a proper relaxation of $u$ which can both capture those concentrations as well as any oscillation effects that need to be taken care of if the problem lacks convexity. Indeed, such a relaxation is what Kružík and Roubíček developed in \cite{KruzikRoubicek99}, where they used DiPerna-Majda measures \cite{DiPernaMajda87} to encapsulate both effects at the same time. In this paper, we propose an alternative approach, which is both simpler in terms of analysis as well as more fine grained in its recovery of concentration aspects of the problem.

\subsection{\texorpdfstring{$L^1$}{L1} optimal control problems: Developing a relaxation}

To introduce this approach, let us start by carrying it out on an example.

\begin{example}[Inspired by {\cite[Ex.\@ 1.2]{KruzikRoubicek99}}] \label{ex:updown}
\begin{align*}\text{Minimize } &\mathcal{F}(y,u):= \int_0^T ((t-1)^2+1)\abs{u} + \abs{y_2}^2 dt +(y_1(T)-1)^2\\
\text{where }&
  \dot{y}_1 = \abs{u},\, y_1(0) = 0\\
  &\dot{y}_2 = u, \, y_2(0) = 0
\end{align*}
and $u \in L^1([0,T]), y \in W^{1,1}([0,T];\R^2)$ for a fixed $T>1$.
\end{example}

This problem puts several obvious constraints on $u$. Due to the final term in the energy, $y_1$ wants to have a final value $y_1(T)$ close to $1$, so integration of the ODE yields $\int_0^T \abs{u} dt \approx 1$. In the same way, by the second term, $\abs{y_2}$ will want to stay small, so the average $\int_0^{t_0} u(t) dt$ has to stay close to $0$. This alone could be satisfied by a simple sequence of oscillating functions, resulting in a Young measure as limit. However, then there is also the first term in the energy, which strongly prefers $\abs{u}$ to concentrate around $t=1$.

It is possible to combine all those constraints into a single sequence and show that it is minimizing. Fix $c>0$ and define
\[u_k: t\mapsto \begin{cases} \frac{c}{\delta_k} &\text{ for } 1-\delta_k \leq t < 1\\ -\frac{c}{\delta_k} &\text{ for } 1 \leq t < 1+\delta_k \\ 0 &\text{ otherwise.}\end{cases}\]
Then solving the ODE shows $(y_k)_1(T) = 2c$ as well as $(y_k)_2 = 0$ outside of $[1-\delta_k,1+\delta_k]$ and $\abs{(y_k)_2} \leq c$ in $[1-\delta_k,1+\delta_k]$. Thus
\[\mathcal{F}(u_k,y_k) \leq \int_{1-\delta}^{1+\delta} (\delta^2+1)\frac{c}{\delta} + c^2dt +(2c-1)^2 \to 2c+(2c-1)^2.\]
Equally there is a lower estimate
\[\mathcal{F}(u,y) \geq \int_0^T \abs{u} dt + (y(T)-1)^2 \geq y_1(T) + (y_1(T)-1)^2\]
so by taking the optimal choice $c := \frac{y(T)}{2} = \frac{1}{4}$ we have indeed a minimizing sequence.

Now what does this mean in the limit? The control will concentrate at $t=1$, but both with positive and negative sign at the same time. Tshis is something which is not possible in any ordinary linear sense such as with distributions, but is a behaviour that needs to be captured using some relaxation. Kružík and Roubíček use DiPerna-Majda measures for this, but in order to develop a different way, let us shift the focus from $u$ towards $y$.

What $y_1$ does at $t=1$, is to move quickly from $0$ to $1/2$ while $y_2$ moves up to $1/4$ and then immediately down to $0$ again. If we continue to model $y$ as a curve in space, those jumps are also problematic. However if we consider them as a curve in space-time instead, they turn into a mostly harmless object. We get a curve $(\tilde{t},\tilde{y})$ in $[0,T]\times\R^2$ that starts at $(0,0,0)$ and then moves in straight line segments through the points $(1,0,0)$, $(1,1/4,1/4)$, $(1,1/2,0)$ and $(T,1/2,0)$.

The goal now is to reformulate the original problem into a related one that uses this curve instead. For this, we will parameterize our new curve as 
\[(\tilde{t},\tilde{y}): [0,S] \to [0,T] \times \R^2; s\mapsto (\tilde{t}(s),\tilde{y}(s)).\] 
We immediately get the boundary conditions $\tilde{y}(0) = 0$, $\tilde{t}(0) = 0$ and $\tilde{t}(S) = T$. 

Concerning the ODE, we want to relate to the original $y$ via $\tilde{y}(s) = y(\tilde{t}(s))$, so if we multiply the original equation by $\frac{dt}{ds}$, we can apply the chain rule to get
\begin{align*}
 \frac{d\tilde{y}_1}{ds}(s) &= \frac{dy_1}{dt}(\tilde{t}(s))\frac{d\tilde{t}}{ds}(s) = \abs{u(\tilde{t}(s))} \frac{d\tilde{t}}{ds}(s) \\
 \frac{d\tilde{y}_2}{ds}(s) &= \frac{dy_2}{dt}(\tilde{t}(s))\frac{d\tilde{t}}{ds}(s) = u(\tilde{t}(s)) \frac{d\tilde{t}}{ds}(s).
\end{align*}
At this point it is reasonable to postulate some control on the flow of time. On one hand $\tilde{t}$ should never be decreasing, as time cannot move backwards. On the other hand, as our fundamental object is a curve and not its parametrization, it does not matter how fast we move through time. As such we can introduce another control variable $\tilde{v}:[0,S] \to [0,\infty)$ along with the differential equation $\frac{d\tilde{t}}{ds}(s) = \tilde{v}(s)$. Now whenever $u$ gets big, we can keep $\frac{d\tilde{y}}{ds}$ small by decreasing $\tilde{v}$. And if we want to have a vertical part, we can set $\tilde{v}(s)=0$ on an interval. To better utilize this time rescaling, it is convenient to define a new control variable $\tilde{u}:[0,S] \to \R^2$ which will correspond to $u(\tilde{t}(s))v(s)$. Then in total we will get the following relaxed ODEs:
\begin{align*}
 \frac{d\tilde{y}_1}{ds}(s) &= \abs{\tilde{u}(s)}\\
 \frac{d\tilde{y}_2}{ds}(s) &= \tilde{u}(s)\\
 \frac{d\tilde{t}}{ds}(s) &= \tilde{v}(s)
\end{align*}

Furthermore as any large values of $u$ can be reached by setting $\tilde{v}$ small, we can bound our controls in $L^\infty$ by $\abs{\tilde{u}(s)} \leq 1$ and $0\leq \tilde{v}(s)\leq 1$. Finally we need to transform the energy by a simple change of variables:

\begin{align*}
 &\phantom{{}={}} \int_0^T ((t-1)^2+1)\abs{u}(s) + \abs{y_2}^2 dt +(y_1(T)-1)^2\\
 &= \int_0^S \left[((\tilde{t}(s)-1)^2+1)\frac{\abs{\tilde{u}(s)}}{\tilde{v}(s)} + \abs{\tilde{y}_2(s)}^2 \right]\frac{d\tilde{t}}{ds} ds +(\tilde{y}_1(S)-1)^2\\
 &= \int_0^S ((\tilde{t}(s)-1)^2+1)\abs{\tilde{u}(s)} + \abs{\tilde{y}_2(s)}^2 \tilde{v}(s) ds +(\tilde{y}_1(S)-1)^2 =: \tilde{\mathcal{F}}(\tilde{t},\tilde{y},\tilde{v},\tilde{u})
\end{align*}
Now in total, we end up with a much more tractible relaxed problem:

\begin{align*}\text{Min. } &\tilde{\mathcal{F}}(\tilde{t},\tilde{y},\tilde{v},\tilde{u}):= \int_0^S ((\tilde{t}(s)-1)^2+1)\abs{\tilde{u}(s)} + \abs{\tilde{y}_2(s)}^2 \tilde{v}(s) ds +(\tilde{y}_1(S)-1)^2\\
\text{where }& \tilde{t}' = \tilde{v},\, \tilde{t}(0) = 0, \tilde{t}(S) = T\\
  &\tilde{y}'_1 = \abs{\tilde{u}},\, \tilde{y}_1(0) = 0\\
  &\tilde{y}'_2 = \tilde{u}, \, \tilde{y}_2(0) = 0
\end{align*}
and $\tilde{v},\tilde{u} \in L^\infty([0,T]), 0\leq \tilde{v} \leq 1, \abs{\tilde{u}} \leq 1, y \in W^{1,\infty}([0,T];\R^2)$ for a free $S>0$.

As one of its minimizers this has
\[(\tilde{v},\tilde{u})(s) = \begin{cases}
                      (1,0) & \text{ for } s \in [0,1[\\
                      (0,1) & \text{ for } s \in [1,5/4[\\
                      (0,-1) & \text{ for } s \in [5/4,3/2[\\
                      (1,0) & \text{ for } s \in [3/2,T+1/2[\\
                     \end{cases}\]
resulting in our curve consisting of line segments.

\subsection{Outline}

The rest of this paper is devoted to making the above process rigorous on a general class of problems. In \autoref{sec:verticalODE}, we will define space-time curves and the associated space-time-relaxed ODEs. In \autoref{sec:Energy} we will discuss the appropriate relaxed energies and show that the resulting relaxed problem has a minimizer in case the original problem featured convexity. In \autoref{sec:Young} we will show that in the non-convex case, the relaxed problem still has a solution in terms of a $L^\infty$-Young measure and we will discuss the relation between this solution and the DiPerna-Majda measure in \cite{KruzikRoubicek99}. Finally in \autoref{sec:Banach}, we will show that this relaxation procedure can be easily extended to Banach spaces.

\subsection{Acknowledgements}

The author would like to especially thank Martin Kružík for pointing him toward the problem and for providing helpful comments on the manuscript. The author also acknowledges the support of the Primus research 
programm PRIMUS/19/SCI/01 and the University Centre UNCE/SCI/023 of 
Charles University. Moreover he thanks for the support of the program 
GJ17-01694Y of the Czech national grant agency (GA\v{C}R).

\subsection{Notation}

In the following, $t$ will be the original time variable while $s$ will be a new parameter along the space-time curve. Since many quantities occur in versions depending on either of them, for any time dependent quantity $a$, the derivative in $t$ will be denoted by $\dot{a} := \frac{da}{dt}$, while the corresponding $s$-dependent version will be denoted by $\tilde{a}$ and its derivative in $s$ will be written as $\tilde{a}' := \frac{d\tilde{a}}{ds}$.

We will use the usual notation $W^{1,p}(\Omega;\R^k)$ for Sobolev spaces and in general understand differential equations to be true in that sense only. We will use $M(I;\R^k)$ to denote the space of $\R^k$-valued Radon measures on $I$. The notation used for Young-Measures will be explained in \autoref{sec:Young}.

\subsection{The general problem and regularity conditions} \label{subsec:generalProblem}

The general non-relaxed problem will be given by
\begin{align*}
 \text{Minimize } & \mathcal{F}(y,u) := \int_0^T f(t,y(t),u(t)) dt + g(y(T))\\
 \text{where } & \dot{y}(t) = A(t,y(t),u(t)) \quad \forall t\in [0,T]\\
 &y(0) = y_0
\end{align*}
where $y:[0,T]\to \R^n$, $u:[0,T] \to \R^k$ as well as $f:[0,T] \times \R^n \times \R^k \to \R\cup\{+\infty\}$, $g:\R^n \to \R\cup\{+\infty\}$ and $A: [0,T] \times \R^n \times \R^k \to \R^n$. We note at this point that it is possible to restrict $y$ and $u$ to subsets of $\R^n$ and $\R^k$ respectively, but we will refrain from doing so.\footnote{Restriction of $y$ to a closed set $\overline{\Omega}$ can be equally achieved by setting $f$ to $+\infty$ at outside points, but $y$ will turn out to be uniformly bounded for minimizers anyway (See \autoref{lem:uniformBounds}). Restriction of $u$ to a compact set is not really interesting as we specifically interested in studying the case $\abs{u(t)} \to \infty$. It might be of interest to restrict in certain directions though, which can easiest be achieved by splitting the control variables into unbounded variables, which are transformed and bounded variables, which are kept. For sake of readability we will however not provide details on this.}

We will require a certain amount of regularity to show existence in the problem. Abstractly speaking we need the following properties.

\begin{enumerate}
 \item Existence of the relaxed energy and ODE, specifically $\tilde{f}$ and $\tilde{A}$ (see Definitions \ref{def:relaxedODE} and \ref{def:relaxedEnergy})
 \item Existence of unique solutions to the relaxed ODE, depending continuously on the controls
 \item (Lower semi-)continuity of the energy under convergence
\end{enumerate}

We achieve this by taking the following assumptions throughout the paper:

\begin{definition}[General assumptions] \label{def:assumptions} Let $A,f,g$ as above. Then we will generally assume:
\begin{enumerate}
 \item For all $t\in [0,T]$, $y\in\R^n$ and $u\in \R^k$ with $\abs{u} = 1$, the limits
 \[\lim_{\lambda\to\infty} \frac{f(t,y,\lambda u)}{\lambda} \text{ and } \lim_{\lambda\to\infty} \frac{A(t,y,\lambda u)}{\lambda}\]
 exist.
 \item The right hand side $A$ is continuous in $t$ and there exists a uniform Lipschitz constant $L\in \R$ such that for all $t\in [0,T]$ and $u\in \R^k$ we have
 \[\abs{A(t,y_1,u)-A(t,y_2,u)} \leq L \abs{y_1-y_2} \quad \forall y_1,y_2\in\R^n.\]
 Furthermore we require a constant $C>0$ such that for all $t\in [0,T]$, $y\in\R^n$ and $u\in \R^k$ we have
 \[\abs{A(t,y,u)} \leq C(\abs{u}+1).\]
 \item The integrand $f$ is equicontinuous in $t$ and $y$. Additionally there exists a constant $c> 0$ such that $t\in [0,T]$, $y\in\R^n$ and $u\in \R^k$ we have 
 \[\abs{f(t,y,u)} \geq c(\abs{u}-1) \]
 Furthermore $g$ is a continuous function bounded from below. 
 \item[3'.] Only in the convex case (\autoref{subsec:convex}): For all $t\in [0,T]$, $y\in\R^n$, the function $f(t,y,.)$ is convex and has a minimizer at $0$.\footnote{The last condition is mostly technical. There needs to be a minimum for $f(t,y,.)$ due to coercivity and similarly the set of minima should stay bounded for solutions with finite energy. We can thus replace all occurences of $u$ with a shifted version, having a minimum in zero.} 
\end{enumerate}
\end{definition}

It should be remarked that those assumptions are by far not the most general possible. Instead they have been chosen to aid in understanding the space-time relaxation while still allowing for a wide range of problems to be covered. As the arguments used are comparatively short, we invite the reader to simply modify them in the approriate places, should their problem not fit into the above assumptions.

\section{Space-time relaxed ODEs}\label{sec:verticalODE}

In this section we will cover only the ODE part of the problem, specifically
\begin{align*}
\dot{y}(s) = A(t,y(t),u(t)) \,\, \forall t\in [0,T].
\end{align*}
which we transform into the following space-time relaxation:

\begin{definition}[Space-time Relaxed ODE] \label{def:relaxedODE}
 The relaxed version of the ODE is given by
\begin{align*}
 & \tilde{t}'(s) = v(s) \\
 & \tilde{y}'(s) = \tilde{A}(\tilde{t}(s),\tilde{y}(s),v(s),\tilde{u}(s))\\
 &\tilde{y}(0) = y_0, \tilde{t}(0) = 0, \tilde{t}(S) =T
\end{align*} 
where 
\[\tilde{A}(\tilde{t},\tilde{y},\tilde{v},\tilde{u}) := \tilde{v}  A(\tilde{t},\tilde{y},\tilde{u}/\tilde{v})\]
for $\tilde{v}>0$ and we extend to $\tilde{v}=0$ by
\[\tilde{A}(\tilde{t},\tilde{y},0,\tilde{u}) := \lim_{\tilde{v}\searrow 0}  \tilde{v} A(t,\tilde{y},\tilde{u}/\tilde{v})\]
We will generally understand $(\tilde{t},\tilde{y})$ to be a solution to the ODE in a $W^{1,\infty}$ sense, i.e. the right hand side will be an $L^\infty$-function in $s$ and correspond to the weak derivative of $\tilde{t}$ and $\tilde{y}$ respectively.
\end{definition}

At this point, it should be noted that $\tilde{A}$ corresponds to an extension of $\tilde{v}A$ onto the compactification of $\R^k$ in the style of the projective plane. Specifically we identify points $\tilde{u}/\tilde{v} \in \R^k$ with tuples $(\tilde{v},\tilde{u}) \in \R^+ \times \R^k$, which leaves tuples $(0,\tilde{u})$ to represent a point at infinity in the direction of $\tilde{u}$. As this representation is not unique, we expect some sort of homogeneity:

\begin{lemma}[Homogeneity of $\tilde{A}$]\label{lem:A1homog}
 The relaxed right hand side $\tilde{A}$ as given in the previous definition is $1$-homogeneous in $(\tilde{v},\tilde{u})$. That is for any $\tilde{t},\tilde{y},\tilde{v},\tilde{u}$ and $\lambda > 0$ we have
 \[\tilde{A}(\tilde{t},\tilde{y},\lambda \tilde{v},\lambda\tilde{u}) =\lambda \tilde{A}(\tilde{t},\tilde{y},\tilde{v},\tilde{u}).\]
\end{lemma}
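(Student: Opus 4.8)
The plan is to split into the two cases $\tilde v > 0$ and $\tilde v = 0$, since $\tilde A$ is defined by a different formula in each. In both cases the proof amounts to unwinding the definition of $\tilde A$ from \autoref{def:relaxedODE} and exploiting the fact that the argument $\tilde u / \tilde v$ is scale-invariant: replacing $(\tilde v, \tilde u)$ by $(\lambda \tilde v, \lambda \tilde u)$ does not change the quotient.

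For the case $\tilde v > 0$: note that $\lambda \tilde v > 0$ as well since $\lambda > 0$, so both sides use the first branch of the definition. I would compute
\[
\tilde A(\tilde t, \tilde y, \lambda \tilde v, \lambda \tilde u) = (\lambda \tilde v)\, A\!\left(\tilde t, \tilde y, \frac{\lambda \tilde u}{\lambda \tilde v}\right) = \lambda \tilde v\, A\!\left(\tilde t, \tilde y, \frac{\tilde u}{\tilde v}\right) = \lambda\, \tilde A(\tilde t, \tilde y, \tilde v, \tilde u),
\]
which is the claim.

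For the case $\tilde v = 0$: then $\lambda \tilde v = 0$ too, so again both sides use the same branch, namely the limiting definition $\tilde A(\tilde t, \tilde y, 0, w) = \lim_{\mu \searrow 0} \mu\, A(\tilde t, \tilde y, w/\mu)$. Here I would substitute $w = \lambda \tilde u$ on the left and perform the change of variables $\mu = \lambda \nu$ in the limit (valid because $\lambda > 0$ is fixed, so $\mu \searrow 0 \iff \nu \searrow 0$), giving $\lim_{\nu \searrow 0} \lambda \nu\, A(\tilde t, \tilde y, \lambda \tilde u / (\lambda \nu)) = \lambda \lim_{\nu \searrow 0} \nu\, A(\tilde t, \tilde y, \tilde u / \nu) = \lambda\, \tilde A(\tilde t, \tilde y, 0, \tilde u)$. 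One should also remark that the limit defining $\tilde A$ at $\tilde v = 0$ exists by assumption~1 of \autoref{def:assumptions} (the limit $\lim_{\lambda \to \infty} A(t,y,\lambda u)/\lambda$ exists for $|u| = 1$), so both sides are well defined; this is really the only place any hypothesis beyond pure algebra enters.

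There is no serious obstacle here — the statement is essentially a bookkeeping consequence of the fact that $\tilde v A(\cdot, \cdot, \tilde u/\tilde v)$ is built to be the $1$-homogeneous envelope of $A$. The only point requiring a little care is making sure the $\tilde v = 0$ branch is handled by an honest change of variables in the limit rather than by appealing to the $\tilde v > 0$ formula, and confirming that the underlying limit exists so that the equality of limits is meaningful.
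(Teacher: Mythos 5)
Your proposal is correct and follows essentially the same route as the paper: a direct computation for $\tilde v > 0$, and for $\tilde v = 0$ the observation that rescaling the limiting variable by $\lambda$ turns $\lambda\,\tilde A(\tilde t,\tilde y,0,\tilde u)$ into the defining limit of $\tilde A(\tilde t,\tilde y,0,\lambda\tilde u)$. Your explicit change of variables $\mu = \lambda\nu$ and the remark on existence of the limit via assumption 1 are just slightly more careful versions of what the paper does implicitly.
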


\begin{proof}
 If $\tilde{v} > 0$ then per definition
 \[\tilde{A}(\tilde{t},\tilde{y},\lambda \tilde{v},\lambda\tilde{u}) = \lambda \tilde{v} A(\tilde{t},\tilde{y},(\lambda\tilde{u})/(\lambda \tilde{v})) = \lambda \tilde{A}(\tilde{t},\tilde{y},\tilde{v},\tilde{u}).\]
 But then $\tilde{A}$ is the extension of a $1$-homogeneous function and thus itself $1$-homogeneous, as
 \[\lambda \tilde{A}(\tilde{t},\tilde{y},0,\tilde{u}) = \lambda \lim_{\tilde{v}\searrow 0} \tilde{v} A(\tilde{t},\tilde{y},\tilde{u}/\tilde{v}) = \lim_{\tilde{v}\searrow 0} (\lambda \tilde{v}) A(\tilde{t},\tilde{y},(\lambda\tilde{u})/(\lambda \tilde{v})) = \tilde{A}(\tilde{t},\tilde{y},0,\lambda \tilde{u}) \qedhere\]
\end{proof}

\begin{definition}[Trace of a solution]
 Let $\tilde{t},\tilde{y},\tilde{u}$ be a solution of the relaxed ODE. Then the trace of this solution is given by the set
 \[\left\{(\tilde{t}(s),\tilde{y}(s))\middle| s\in [0,S]\right\} \subset [0,T] \times \R^n\]
\end{definition}

In this context, the name trace is to be understood in the meaning of the trace of a curve. In fact the next lemma shows that we can reparameterize our solutions without invalidating the ODE or changing the trace.

\begin{lemma}[Invariance under reparametrization]
If $(\tilde{t},\tilde{y},\tilde{v},\tilde{u})$ is a solution of the relaxed ODE and $\phi:[0,\hat{S}] \to [0,S]$ is a surjective, increasing Lipschitz-function, then \[(\tilde{t}\circ \phi,\tilde{y} \circ \phi, \tilde{v} \circ \phi \phi', \tilde{u} \circ \phi \phi')\] is another solution of the relaxed ODE with the same trace.
\end{lemma}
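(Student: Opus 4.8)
The plan is to verify directly that the reparametrized tuple $(\hat t,\hat y,\hat v,\hat u) := (\tilde t\circ\phi,\ \tilde y\circ\phi,\ (\tilde v\circ\phi)\,\phi',\ (\tilde u\circ\phi)\,\phi')$ satisfies every clause of \autoref{def:relaxedODE} on $[0,\hat S]$, and then observe that the trace is unchanged because $\phi$ is surjective. First I would record the regularity: since $\phi$ is Lipschitz and increasing, $\phi'\in L^\infty([0,\hat S])$ with $\phi'\ge 0$ a.e., and $\tilde t\circ\phi,\tilde y\circ\phi$ are Lipschitz as compositions of a Lipschitz function with a Lipschitz function; moreover $\tilde v\circ\phi,\tilde u\circ\phi\in L^\infty$, so $\hat v,\hat u\in L^\infty([0,\hat S])$ as well, and the bounds $0\le\hat v\le\phi'$, $|\hat u|\le|\tilde u\circ\phi|\,\phi'$ are harmless (the problem only later imposes $\tilde v\le 1$, $|\tilde u|\le1$, which this lemma does not claim to preserve). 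The boundary conditions are immediate: $\hat t(0)=\tilde t(\phi(0))=\tilde t(0)=0$, $\hat t(\hat S)=\tilde t(\phi(\hat S))=\tilde t(S)=T$, and $\hat y(0)=\tilde y(\phi(0))=y_0$.

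The core is the chain rule for the composition of a $W^{1,\infty}$ function with a Lipschitz map. For a.e.\ $\sigma\in[0,\hat S]$ one has $\hat t'(\sigma)=\tilde t'(\phi(\sigma))\,\phi'(\sigma)=v(\phi(\sigma))\,\phi'(\sigma)=\hat v(\sigma)$, using that $\tilde t'=v$ a.e.\ on $[0,S]$. Similarly, using $\tilde y'(\cdot)=\tilde A(\tilde t(\cdot),\tilde y(\cdot),\tilde v(\cdot),\tilde u(\cdot))$ a.e.\ on $[0,S]$, we get
\begin{align*}
 \hat y'(\sigma) &= \tilde y'(\phi(\sigma))\,\phi'(\sigma) = \phi'(\sigma)\,\tilde A\bigl(\tilde t(\phi(\sigma)),\tilde y(\phi(\sigma)),\tilde v(\phi(\sigma)),\tilde u(\phi(\sigma))\bigr) \\
 &= \tilde A\bigl(\hat t(\sigma),\hat y(\sigma),\phi'(\sigma)\tilde v(\phi(\sigma)),\phi'(\sigma)\tilde u(\phi(\sigma))\bigr) = \tilde A\bigl(\hat t(\sigma),\hat y(\sigma),\hat v(\sigma),\hat u(\sigma)\bigr),
\end{align*}
where the third equality is exactly the $1$-homogeneity of $\tilde A$ from \autoref{lem:A1homog} (applied with $\lambda=\phi'(\sigma)>0$; on the null set where $\phi'(\sigma)=0$ the identity still holds by the homogeneity extended to $\tilde v=0$, or one simply discards that null set). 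Finally, since $\phi$ is surjective, $\{(\hat t(\sigma),\hat y(\sigma)):\sigma\in[0,\hat S]\}=\{(\tilde t(\phi(\sigma)),\tilde y(\phi(\sigma))):\sigma\in[0,\hat S]\}=\{(\tilde t(s),\tilde y(s)):s\in[0,S]\}$, so the traces coincide.

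The only genuinely delicate point is the chain rule: $\tilde t,\tilde y$ are merely $W^{1,\infty}$ (Lipschitz), so their derivatives are only defined a.e., and one must be careful that $\tilde t'(\phi(\sigma))$ and $\tilde y'(\phi(\sigma))$ make sense for a.e.\ $\sigma$ and equal the weak derivative of the composition. The standard tool here is that the composition of a Lipschitz function with a monotone Lipschitz change of variables obeys the chain rule a.e.\ (this is classical; one can cite it or derive it from the area formula / absolute continuity of $\phi$, noting that $\phi$ maps the null set where $\tilde t'$ or $\tilde y'$ fails to exist into a set whose preimage under $\phi$ is still null since $\phi'\in L^\infty$ forces $\phi$ to map null sets to null sets, and conversely $\phi^{-1}$ of a null set is null because... — actually the cleanest route is: $\tilde t,\tilde y$ being Lipschitz and $\phi$ Lipschitz increasing, $\tilde t\circ\phi$ and $\tilde y\circ\phi$ are Lipschitz, hence differentiable a.e., and at any $\sigma$ where $\phi$ is differentiable with $\tilde t,\tilde y$ differentiable at $\phi(\sigma)$ the classical chain rule gives the stated formula; one then checks the exceptional set has measure zero using that $\phi$ is absolutely continuous and $\{\phi'>0\}$ carries the mass of $\phi$). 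I would state this as a short lemma-internal remark rather than belabor it, since it is routine real analysis. Once the chain rule is in hand, everything else is bookkeeping plus a single invocation of \autoref{lem:A1homog}.
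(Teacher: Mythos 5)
Your proof is correct and follows essentially the same route as the paper's: apply the chain rule to $\tilde t\circ\phi$ and $\tilde y\circ\phi$, invoke the $1$-homogeneity of $\tilde A$ from \autoref{lem:A1homog} to absorb the factor $\phi'$, and note that surjectivity of $\phi$ preserves the trace. You are in fact more careful than the paper about the a.e.\ validity of the chain rule for Lipschitz compositions and about the degenerate case $\phi'(\sigma)=0$ (which, contrary to your parenthetical, can occur on a set of positive measure, but your fallback via the extension of $\tilde A$ to $\tilde v=0$ handles it correctly anyway).
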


\begin{proof} Both solutions share the same trace as $(\tilde{t}\circ \phi,\tilde{y} \circ \phi)$ is just a reparametrization of the underlying curve. Next let us show that the reparametrization also solves the ODE. For almost all $\hat{s}\in[0,S]$ we have
\[\frac{d (\tilde{t} \circ \phi) }{d\hat{s}}(\hat{s}) = \frac{d\tilde{t}}{ds}(\phi(\hat{s})) \frac{d\phi}{d\hat{s}} = \tilde{v} \circ \phi (\hat{s}) \phi'(\hat{s})\]
as well as using the $1$-homogeneity of $\tilde{A}$
\begin{align*}
 \frac{d}{d\hat{s}} \tilde{y} \circ \phi (\hat{s}) &= \phi'(\hat{s}) \tilde{y}'(\phi(\hat{s})) = \phi'(\hat{s}) \tilde{A}(\tilde{t}\circ \phi(\hat{s}),\tilde{y} \circ \phi(\hat{s}), \tilde{v} \circ \phi(\hat{s}), \tilde{u} \circ \phi (\hat{s}))\\
 &= \tilde{A}(\tilde{t}\circ \phi(\hat{s}),\tilde{y} \circ \phi(\hat{s}), \phi'(\hat{s}) \tilde{v} \circ \phi(\hat{s}), \phi'(\hat{s})\tilde{u} \circ \phi (\hat{s})) \qedhere
\end{align*}
\end{proof}

\begin{remark}
 The reparametrization does not have to be strictly increasing. In a sense, we can ``wait'' a certain amount of time.
\end{remark}

\begin{definition}[Associated solutions]
Let $y,u$ be a solution of the original problem and $\tilde{t},\tilde{y},\tilde{v},\tilde{u}$ a solution  of the relaxed problem. We call those solutions associated, if 
\[\tilde{y}(s) = y(\tilde{t}(s))\text{ and } u(\tilde{t}(s)) \tilde{v}(s) = \tilde{u}(s) \text{ for almost all } s \in [0,S].\]
\end{definition}

\begin{lemma}[Existence of associated relaxed solutions]\label{lem:associatedODEsolution}
For any solution $y,u$ of the original problem there exists an associated relaxed solution of the form
\begin{align*}
 \tilde{t}(s) = s, \tilde{v}(s) = 1, \tilde{y}(s) = y(s), \tilde{u}(s) = u(s)
\end{align*}
The trace of this solution is exactly the graph of $y$.
\end{lemma}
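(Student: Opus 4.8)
\textit{Proof proposal.}

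The plan is to verify directly that the quadruple $(\tilde{t},\tilde{y},\tilde{v},\tilde{u})(s) = (s,\,y(s),\,1,\,u(s))$, posed on $[0,S]$ with $S=T$, satisfies every condition of \autoref{def:relaxedODE} together with the definition of an associated solution. Since the time reparametrization here is the identity $\tilde{t}(s)=s$, no honest change of variables is involved and the entire verification collapses to a single pointwise identity for $\tilde{A}$.

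Concretely, I would proceed as follows. The constraint $S=T$ is forced by $\tilde{t}(S)=\tilde{t}(T)=T$, and the remaining boundary data $\tilde{t}(0)=0$ and $\tilde{y}(0)=y(0)=y_0$ are immediate. For the differential equations, $\tilde{t}'(s)=1=\tilde{v}(s)$ holds for a.e.\ $s$, while for $\tilde{y}$ we note that $\tilde{v}(s)=1>0$, so the defining formula of $\tilde{A}$ on $\{\tilde{v}>0\}$ applies and yields
\[
\tilde{A}\bigl(\tilde{t}(s),\tilde{y}(s),\tilde{v}(s),\tilde{u}(s)\bigr) = 1\cdot A\bigl(s,y(s),u(s)/1\bigr) = A\bigl(s,y(s),u(s)\bigr) = \dot{y}(s) = \tilde{y}'(s),
\]
the third equality being precisely the ODE solved by $(y,u)$. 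The associatedness relations $\tilde{y}(s)=y(\tilde{t}(s))$ and $u(\tilde{t}(s))\tilde{v}(s)=\tilde{u}(s)$ reduce to $y(s)=y(s)$ and $u(s)\cdot 1=u(s)$. Finally, the trace is $\{(\tilde{t}(s),\tilde{y}(s)):s\in[0,S]\}=\{(s,y(s)):s\in[0,T]\}$, which is by definition the graph of $y$.

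There is no genuine obstacle here; the one point worth flagging is the regularity class. Since $u$ is only assumed to lie in $L^1$ and $y$ in $W^{1,1}$, the right-hand side $A(s,y(s),u(s))$ — controlled only by $C(\abs{u(s)}+1)\in L^1$ via \autoref{def:assumptions} — need not lie in $L^\infty$, so the associated solution produced above is in general merely a $W^{1,1}$ solution of the relaxed ODE rather than a $W^{1,\infty}$ one. This is harmless for the use we make of the lemma (embedding admissible pairs of the original problem into the relaxed one, hence $\inf\tilde{\mathcal{F}}\le\inf\mathcal{F}$), and it is exactly why the regularity convention of \autoref{def:relaxedODE} is phrased with the qualifier ``generally''; alternatively, one could first truncate along a minimizing sequence, as permitted by the a priori bounds of \autoref{sec:Energy}, after which the $W^{1,\infty}$ version applies verbatim.
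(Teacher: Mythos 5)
Your verification is correct and is exactly what the paper intends — its own proof is literally ``Follows directly from the definition,'' and you have simply written out that direct check. Your side remark that the associated solution is in general only $W^{1,1}$ (since $\tilde{u}=u\in L^1$ need not be bounded) is a fair and accurate observation about a point the paper glosses over, but it does not change the argument.
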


\begin{proof}
 Follows directly from the definition.
\end{proof}

\begin{remark}
 The obverse is not true, otherwise we would not need a relaxation. In fact the relaxed solutions without a classical equivalent are those for which the graph of $y$ contains vertical parts, as seen in the introduction.
\end{remark}

\begin{definition}[Normalized solutions]
 A solution $\tilde{t},\tilde{y},\tilde{v},\tilde{u}$ is called normalized if for almost all $s\in [0,S]$ we have $\max(\tilde{v},\abs{\tilde{u}}) = 1$.
\end{definition}

\begin{lemma}[Uniqueness of normalization] \label{lem:uniqNormalization}
 Let $(\tilde{t},\tilde{y},\tilde{v},\tilde{u})$ be a solution to the relaxed problem. Then there is exactly one normalized solution that can be reparameterized to $(\tilde{t},\tilde{y},\tilde{v},\tilde{u})$.
\end{lemma}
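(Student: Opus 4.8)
The plan is to normalize by reparametrizing with respect to the ``intrinsic speed'' $m(s):=\max(\tilde{v}(s),\abs{\tilde{u}(s)})$, which lies in $L^\infty([0,S])$ by the bounds on the controls and is nonnegative. Set $\Phi(s):=\int_0^s m(\sigma)\,d\sigma$ and $\hat{S}:=\Phi(S)$, so that $\Phi\colon[0,S]\to[0,\hat{S}]$ is nondecreasing, Lipschitz with constant $\norm[\infty]{m}$, surjective, and $\Phi(0)=0$. A preliminary observation is that on the set $\{m=0\}$ both $\tilde{t}$ and $\tilde{y}$ are locally constant: there $\tilde{v}=0$, hence $\tilde{u}=0$ as well (since $\abs{\tilde{u}}\le m$), so $\tilde{t}'=0$ and $\tilde{y}'=\tilde{A}(\tilde{t},\tilde{y},0,0)=\lim_{v\searrow 0}v\,A(\tilde{t},\tilde{y},0)=0$, using the linear growth bound $\abs{A(\cdot,\cdot,0)}\le C$ from \autoref{def:assumptions}. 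So the portions of the curve with $m=0$ are ``idle'' and are exactly what the normalization collapses.

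For \emph{existence}, I would define the candidate normalized solution on $[0,\hat{S}]$ through the right inverse $\psi(\tau):=\sup\{s\in[0,S]:\Phi(s)\le\tau\}$ of $\Phi$ (which is strictly increasing, possibly with jumps across the plateaus of $\Phi$, and satisfies $\Phi\circ\psi=\mathrm{id}$), by setting $\hat{t}:=\tilde{t}\circ\psi$, $\hat{y}:=\tilde{y}\circ\psi$, $\hat{v}:=\tfrac{\tilde{v}}{m}\circ\psi$, $\hat{u}:=\tfrac{\tilde{u}}{m}\circ\psi$. The verification then proceeds in four steps: (i) $\hat{t},\hat{y}$ are Lipschitz, since $\abs{\hat{t}(\tau_1)-\hat{t}(\tau_2)}\le\int_{\psi(\tau_1)}^{\psi(\tau_2)}\tilde{v}\le\int_{\psi(\tau_1)}^{\psi(\tau_2)}m=\abs{\tau_1-\tau_2}$ and similarly $\abs{\hat{y}(\tau_1)-\hat{y}(\tau_2)}\le 2C\abs{\tau_1-\tau_2}$ using $\abs{\tilde{A}}\le C(\abs{\tilde{u}}+\tilde{v})\le 2Cm$ --- this survives the jumps of $\psi$ precisely because $\tilde{t},\tilde{y}$ are constant on the collapsed intervals; (ii) $(\hat{t},\hat{y},\hat{v},\hat{u})$ solves the relaxed ODE with the correct boundary data, which follows from the change-of-variables identity $\int h(\Phi(s))\Phi'(s)\,ds=\int h\,d\tau$ for monotone absolutely continuous $\Phi$, together with the $1$-homogeneity of $\tilde{A}$ (\autoref{lem:A1homog}) to rewrite $\tilde{A}(\tilde{t},\tilde{y},\tilde{v},\tilde{u})=m\,\tilde{A}(\tilde{t},\tilde{y},\tfrac{\tilde{v}}{m},\tfrac{\tilde{u}}{m})$ on $\{m>0\}$; (iii) it is normalized, i.e.\ $\max(\hat{v},\abs{\hat{u}})=(m\circ\psi)/(m\circ\psi)=1$ a.e., because $m\circ\psi>0$ a.e.: $\{\tau:m(\psi(\tau))=0\}\subseteq\Phi(\{m=0\})$ and an absolutely continuous function sends $\{\Phi'=0\}$ to a Lebesgue-null set; (iv) reparametrizing $(\hat{t},\hat{y},\hat{v},\hat{u})$ by $\Phi$ recovers $(\tilde{t},\tilde{y},\tilde{v},\tilde{u})$ --- $\hat{t}\circ\Phi$ and $\tilde{t}$ are continuous and agree a.e.\ (since $\psi(\Phi(s))$ lies in the same collapsed interval as $s$), hence everywhere, likewise $\hat{y}\circ\Phi=\tilde{y}$, and $(\hat{v}\circ\Phi)\Phi'=\tilde{v}$, $(\hat{u}\circ\Phi)\Phi'=\tilde{u}$ a.e.\ because $\Phi'=m$ and $\psi(\Phi(s))=s$ off the null set $\{m=0\}$.

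For \emph{uniqueness}, suppose $(\hat{t}_i,\hat{y}_i,\hat{v}_i,\hat{u}_i)$ on $[0,\hat{S}_i]$, $i=1,2$, are normalized solutions reparametrizing to $(\tilde{t},\tilde{y},\tilde{v},\tilde{u})$ via surjective increasing Lipschitz maps $\phi_i\colon[0,S]\to[0,\hat{S}_i]$. Then for a.e.\ $s$ one has $m(s)=\phi_i'(s)\max(\hat{v}_i(\phi_i(s)),\abs{\hat{u}_i(\phi_i(s))})=\phi_i'(s)$: at the null set of $s$ where $\phi_i(s)$ is not a normalization point of solution $i$, $\phi_i'(s)=0$ because $\phi_i'$ vanishes a.e.\ on the preimage of a null set under a Lipschitz map, and then $\tilde{v}(s)=\tilde{u}(s)=0$ too. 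Since $\phi_i(0)=0$, this forces $\phi_i=\Phi$ and $\hat{S}_i=\hat{S}$. Consequently $\hat{t}_1\circ\Phi=\tilde{t}=\hat{t}_2\circ\Phi$ everywhere, so $\hat{t}_1=\hat{t}_2$ by surjectivity of $\Phi$, and likewise $\hat{y}_1=\hat{y}_2$; for the controls, $(\hat{v}_1\circ\Phi)\Phi'=(\hat{v}_2\circ\Phi)\Phi'$ a.e.\ yields $\int_0^{\hat{S}}\abs{\hat{v}_1-\hat{v}_2}=\int_0^S\abs{\hat{v}_1\circ\Phi-\hat{v}_2\circ\Phi}\,\Phi'=0$ by the same change-of-variables formula, so $\hat{v}_1=\hat{v}_2$ a.e., and similarly $\hat{u}_1=\hat{u}_2$.

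I expect the main obstacle to be the measure-theoretic bookkeeping around the set $\{m=0\}$ and the discontinuity of the pseudo-inverse $\psi$: one must ensure that the composed objects are genuine $W^{1,\infty}$ solutions and that every ``almost everywhere'' statement is stable under composition with $\psi$ or $\Phi$. The two nontrivial ingredients doing this work are the Luzin (N) property of absolutely continuous functions (so that $\Phi$ and the $\phi_i$ map null sets, and $\{\Phi'=0\}$, to null sets) and the substitution formula $\int h(\Phi)\Phi'=\int h$ for monotone absolutely continuous $\Phi$; the rest is the homogeneity of $\tilde{A}$ and the linear growth bounds of \autoref{def:assumptions}.
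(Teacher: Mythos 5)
Your proposal is correct and follows essentially the same route as the paper: reparametrize by $l(\hat s)=\int_0^{\hat s}\max(\tilde v(\sigma),\abs{\tilde u(\sigma)})\,d\sigma$, pass to its right-inverse, and use the $1$-homogeneity of $\tilde A$ to see that the plateaus (where both controls vanish) carry no motion of $(\tilde t,\tilde y)$ and may be collapsed. You are in fact somewhat more complete than the paper, whose proof only carries out the existence half in detail; your observation that any reparametrization $\phi_i$ onto a normalized solution must satisfy $\phi_i'=\max(\tilde v,\abs{\tilde u})$ a.e.\ supplies the uniqueness half explicitly.
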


\begin{proof}
 The proof is essentially parametrization of a curve by arc-length. Specifically define
 \[l: [0,S] \to \R; \hat{s}\mapsto \int_0^{\hat{s}} \max(\tilde{v}(s),\abs{\tilde{u}(s)}) ds.\]
 Then $l$ is continuous and monotone and there exists a right-inverse $\phi:[0,\hat{S}]$ with $l \circ \phi = id$. We cannot reparameterize using $\phi$ directly, as $\phi$ can have jumps, but we note that if $\phi(x^-) < \phi(x^+)$ for an $x\in[0,\hat{S}]$, then $v = 0$ and $\tilde{u} = 0$ almost everywhere on $[\phi(x^-),\phi(x^+)]$. But since $\tilde{A}$ is 1-homogeneous, this means that $\tilde{t}$ and $\tilde{y}$ are constant on that interval. As a result an easy calculation shows that
 \[(\tilde{t}\circ \phi,\tilde{y} \circ \phi, \tilde{v} \circ \phi \phi', \tilde{u} \circ \phi \phi')\]
 is defined almost everywhere and also a solution of the problem. Furthermore
 \[ 1 = (l \circ \phi)' = \max(\tilde{v} \circ \phi, \tilde{u} \circ \phi) \phi'\]
 almost everywhere, and thus the new solution is normalized. Finally reparameterizing the new solution using $l$ as a parametrisation yields the original solution $(\tilde{t},\tilde{y},\tilde{v},\tilde{u})$.
\end{proof}

\begin{remark}
 The choice of normalization is somewhat arbitrary. As we have seen, solutions can be reparameterized using arbitrary monotone changes of variables. However there seems to be no useful canonical way to do so. 
 
 Looking at \autoref{lem:associatedODEsolution}, requiring $\tilde{v}(s)=1$ seems to be most natural, but of course this is only possible if there are no vertical parts. As soon as $\tilde{v}(s)=0$ for more than a neglible set of $s\in[0,S]$, this is no longer possible. Furthermore this will not give us an $L^\infty$ bound on $\tilde{v}$ and $\tilde{u}$, which we want to use for convergence later.

 An interesting object is the space-time curve $(\tilde{t},\tilde{y})$. While setting $\tilde{v}=1$ corresponds to parametrizing this curve as a graph of $t$, another possibility would be parametrization by arc-length. It is possible to do so in this problem as well. However this would only give us an indirect control of $\tilde{u}$ via $\tilde{A}$ which is not ideal later on.
 
 One might use $\tilde{v}^2+\abs{\tilde{u}}^2=1$ instead of $\max(\tilde{v},\abs{u})=1$ but sticking to the latter allows us to treat $\tilde{v}$ and $\tilde{u}$ more independently.
\end{remark}

Next let us show that given reasonable $\tilde{u}$ and $\tilde{v}$ there always exists a unique solution to the ODE and that this solution depends continuously on $\tilde{u}$ and $\tilde{v}$.
\begin{proposition}[ODE solutions for fixed controls] \label{prop:exODE}
 Fix $S>0$ and let $\tilde{u}\in L^\infty([0,S];\R^k)$ and $\tilde{v}\in L^\infty([0,S];\R^+)$ with $\norm[1]{v} = T$. Then there exists a unique corresponding solution $\tilde{t},\tilde{y}$ to the space time ODE. Furthermore if $A$ is linear, finding the corresponding solution is a weak-$\star$ continuous map from $L^\infty$ to $W^{1,\infty}$.
\end{proposition}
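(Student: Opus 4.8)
The plan is to solve the two equations in turn -- the $\tilde{t}$-equation decouples and is essentially trivial -- and then to read off the continuity statement from a compactness-plus-uniqueness argument in which the linearity of $A$ is exactly what allows the weak-$\star$ limit to pass through the right-hand side.

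First I would note that $\tilde{t}'=v$ together with $\tilde{t}(0)=0$ forces $\tilde{t}(s)=\int_0^s v(\sigma)\,d\sigma$; this lies in $W^{1,\infty}$ since $v\in L^\infty$, is nondecreasing since $v\geq 0$, and satisfies $\tilde{t}(S)=\int_0^S v=\norm[1]{v}=T$, so the terminal condition $\tilde{t}(S)=T$ is precisely the hypothesis imposed on $v$. With $\tilde{t}$ now a fixed Lipschitz function, the $\tilde{y}$-equation is an ODE whose right-hand side, by \autoref{lem:A1homog} together with \autoref{def:assumptions}, satisfies $\abs{\tilde{A}(\tilde{t},\tilde{y}_1,\tilde{v},\tilde{u})-\tilde{A}(\tilde{t},\tilde{y}_2,\tilde{v},\tilde{u})}\leq L\,\tilde{v}\,\abs{\tilde{y}_1-\tilde{y}_2}$ and $\abs{\tilde{A}(\tilde{t},\tilde{y},\tilde{v},\tilde{u})}\leq C(\abs{\tilde{u}}+\tilde{v})$ for every $\tilde{v}\geq 0$ -- each inequality first checked for $\tilde{v}>0$ directly from the bounds on $A$ and then extended to $\tilde{v}=0$ via the defining limit (at $\tilde{v}=0$ the Lipschitz bound even degenerates to $0$). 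Hence, as a function of $s$, the right-hand side has Lipschitz constant in $\tilde{y}$ at most $L\norm{v}$ and is bounded by $C(\norm{\tilde{u}}+\norm{v})\in L^\infty$. A Banach fixed-point argument on $C([0,S];\R^n)$ with an exponentially weighted norm (equivalently, the Carath\'eodory existence theorem plus Gr\"onwall for uniqueness) then produces a unique $\tilde{y}\in W^{1,\infty}([0,S];\R^n)$.

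For the continuity claim, write $A(t,y,u)=B(t)y+C(t)u$ with $B,C$ continuous; then $\tilde{A}(\tilde{t},\tilde{y},\tilde{v},\tilde{u})=\tilde{v}B(\tilde{t})\tilde{y}+C(\tilde{t})\tilde{u}$, which is affine in $(\tilde{y},\tilde{u})$ and, crucially, non-singular as $\tilde{v}\searrow 0$. Given $(\tilde{u}_n,v_n)\rightharpoonup^\star(\tilde{u},v)$ in $L^\infty$, these sequences are bounded by uniform boundedness, so the corresponding $\tilde{t}_n$ and $\tilde{y}_n$ are uniformly bounded and uniformly Lipschitz (the a priori bounds above plus Gr\"onwall), hence relatively compact in $C([0,S])$ by Arzel\`a--Ascoli; moreover $\tilde{t}_n(s)=\int_0^s v_n\to\int_0^s v=\tilde{t}(s)$. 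Along a subsequence $\tilde{y}_n\to z$ uniformly, and I would pass to the limit in the integral form $\tilde{y}_n(s)=y_0+\int_0^s v_n B(\tilde{t}_n)\tilde{y}_n+\int_0^s C(\tilde{t}_n)\tilde{u}_n$: in each term the coefficient ($B(\tilde{t}_n)\tilde{y}_n$, resp.\ $C(\tilde{t}_n)$) converges uniformly, hence strongly in $L^1$, by continuity of $B,C$, while $v_n$ (resp.\ $\tilde{u}_n$) converges weak-$\star$, so the pairings converge and $z$ solves the limit equation; by the uniqueness just established $z=\tilde{y}$, and since every subsequence has a further subsequence with this same limit, $\tilde{y}_n\to\tilde{y}$ and $\tilde{t}_n\to\tilde{t}$ uniformly. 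The same strong-times-weak-$\star$ product argument applied to $\tilde{y}_n'=v_n B(\tilde{t}_n)\tilde{y}_n+C(\tilde{t}_n)\tilde{u}_n$ gives $\tilde{y}_n'\rightharpoonup^\star\tilde{y}'$ in $L^\infty$, and $\tilde{t}_n'=v_n\rightharpoonup^\star v=\tilde{t}'$, which is exactly weak-$\star$ convergence in $W^{1,\infty}$.

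I expect the genuinely delicate point to be the verification, inside the existence step, that $s\mapsto\tilde{A}(\tilde{t}(s),\tilde{y}(s),v(s),\tilde{u}(s))$ is measurable and that the Lipschitz and growth bounds really do survive the passage to $\{\tilde{v}=0\}$, since that is where item~1 of \autoref{def:assumptions} (asymptotic linearity of $A$) is needed to guarantee that the defining limit in \autoref{def:relaxedODE} exists and behaves continuously. The compactness step for the continuity claim is otherwise routine; it is worth stressing that linearity is not required for existence or uniqueness but is essential for weak-$\star$ continuity, because for a genuinely nonlinear $A$ the control $\tilde{u}_n$ would enter $\tilde{A}$ nonlinearly and could not be carried to the weak-$\star$ limit.
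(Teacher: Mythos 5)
Your proposal is correct and follows essentially the same route as the paper: the $\tilde{t}$-equation is solved explicitly, existence and uniqueness for $\tilde{y}$ come from the uniform Lipschitz bound on $\tilde{y}\mapsto\tilde{A}(\tilde{t},\tilde{y},\tilde{v}(s),\tilde{u}(s))$ inherited from \autoref{def:assumptions}, and weak-$\star$ continuity is obtained by compactness (uniform convergence of $\tilde{t}_n,\tilde{y}_n$), a strong-times-weak-$\star$ passage to the limit in the terms where the controls enter linearly, and uniqueness of the limit solution to upgrade to full-sequence convergence. The only cosmetic difference is that you write the linear structure out explicitly as $A=B(t)y+C(t)u$ (the paper only needs linearity in $u$, keeping the $y$-dependence Lipschitz, and splits the difference $\tilde{A}(\tilde{t}_k,\tilde{y}_k,\tilde{v}_k,\tilde{u}_k)-\tilde{A}(\tilde{t},\tilde{y},\tilde{v},\tilde{u})$ accordingly), and your remarks on measurability and on the degenerate Lipschitz constant at $\tilde{v}=0$ are accurate refinements of points the paper leaves implicit.
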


\begin{proof}
 For $\tilde{t}$, we note that $\tilde{t}' = \tilde{v}$, $\tilde{t}(0) = 0$ already determines $\tilde{t}$ uniquely and the continuous dependence on $\tilde{v}$ is also clear.

 For $\tilde{y}$, using the Lipschitz condition in \autoref{def:assumptions} (2.), we get that
 \[\tilde{y}\mapsto \tilde{A}(\tilde{t},\tilde{y},\tilde{v}(s),\tilde{u}(s))\]
 is uniformly Lipschitz. Thus short term existence of a unique $\tilde{y}$ follows from standard ODE theory and existence on $[0,S]$ follows from iterating this argument as $\tilde{A}$ is bounded and $\tilde{y}$ is not restricted to a subset of $\R^n$.
 
 Now let $\tilde{v}_k,\tilde{u}_k \rightharpoonup^\star \tilde{v},\tilde{u}$ in $L^\infty$ and denote the corresponding solutions by $\tilde{t}_k,\tilde{y}_k$. We already know that $\tilde{t}_k \rightharpoonup \tilde{t}$ in $W^{1,2}$ and thus $\tilde{t}_k \to \tilde{t}$ uniformly. As we have
 \[\abs{\tilde{A}(\tilde{t},\tilde{y},\tilde{v},\tilde{u})} = \abs{\tilde{v}}\abs{A(\tilde{t},\tilde{y},\tilde{u}/\tilde{v})} \leq C \tilde{v}\left(\abs{\frac{\tilde{u}}{\tilde{v}}} +1 \right) = C(\abs{\tilde{u}}+1)\]
 which in particular extends to $\tilde{v} = 0$, we know that $\tilde{y}_k$ is uniformly bounded in $W^{1,\infty}$ and thus any subsequence has a weak-$\star$ converging subsequence (not relabeled) with a limit $\tilde{y}$. In particular we have $\tilde{y}_k\to\tilde{y}$ uniformly. Now let $\phi \in L^1([0,S];\R^n)$. Then
 \begin{align*}
  \phantom{{}={}} \int_0^S \phi \cdot \left(\tilde{y}_k - \tilde{A}(\tilde{t},\tilde{y},\tilde{v},\tilde{u}) \right)ds 
  = \int_0^S \phi \cdot \left(\tilde{A}(\tilde{t}_k,\tilde{y}_k,\tilde{v}_k,\tilde{u}_k) - \tilde{A}(\tilde{t},\tilde{y},\tilde{v}_k,\tilde{u}_k) \right)ds \\ + \int_0^S \phi \cdot \left(\tilde{A}(\tilde{t},\tilde{y},\tilde{v}_k,\tilde{u}_k) - \tilde{A}(\tilde{t},\tilde{y},\tilde{v},\tilde{u}) \right)ds
 \end{align*}
 Here the first integral converges to $0$ due to the Lipschitz-continuity of $\tilde{A}$ and the uniform convergence of $\tilde{y}_k$, while for the second integral we note that if $A$ is linear, then $\int_0^S \phi \cdot \tilde{A}(\tilde{t},\tilde{y},.,.) ds$ is a linear functional in $(L^\infty)^\star$ and so the integral also converges to $0$. Thus by uniqueness of limits, $\tilde{y}' = \tilde{A}(\tilde{t},\tilde{y},\tilde{v},\tilde{u})$ and since this limit is unique as the solution to the relaxed ODE, the original sequence converges as required.
\end{proof}

\begin{remark}[The $1$-homogeneous case]

The case in which $A$ scales linear in $u$ is somethat special, as it allows us to restate the relaxed problem in a similar linear form. In fact the important property here is homogeneity of degree 1, that is we need that $A(\lambda u) = \lambda A(u)$ for $\lambda \geq 0$, so cases such as $A(u) = \abs{u}$ from \autoref{ex:updown} are covered as well.

In this case
\[\tilde{A}(\tilde{t},\tilde{y},\tilde{v},\tilde{u}) = \tilde{v} A(\tilde{t},\tilde{y},\tilde{u}/\tilde{v}) = A(\tilde{t},\tilde{y},\tilde{u})\]
which immediately extends to $\tilde{v}=0$ and simplifies the relaxed ODE to
\begin{align*}
 \tilde{t}' &= \tilde{v}\\
 \tilde{y}' &= A(\tilde{t},\tilde{y},\tilde{u})
\end{align*}
where the right hand side in the second equation now is linear/$1$-homogeneous in $\tilde{u}$ as well as in $(\tilde{v},\tilde{u})$.
\end{remark}

\section{Relaxed energies: Concentrations and the convex case}\label{sec:Energy}

\subsection{The relaxed energy}

We will now apply a similar procedure to relax the energy functional.

\begin{definition}[Relaxed energy] \label{def:relaxedEnergy}
 We define the relaxed integrand by
 \[\tilde{f}(\tilde{t},\tilde{y},\tilde{v},\tilde{u}) := \tilde{v} f(\tilde{t},\tilde{y},\tilde{u}/\tilde{v})\]
 for $\tilde{v}> 0$ and by the limit $\tilde{v}\searrow 0$ for $\tilde{v} =0$. The relaxed energy is then given by
 \[\tilde{\mathcal{F}}(\tilde{t},\tilde{y},\tilde{v},\tilde{u}) := \int_0^S \tilde{f}(\tilde{t},\tilde{y},\tilde{v},\tilde{u}) ds + g(\tilde{t}(S)).\]
\end{definition}

Let us first assemble some of the more immediate properties of the relaxed integrand.

\begin{lemma}[Homogeneity of $\tilde{f}$]
 The relaxed integrand $\tilde{f}$ is $1$-homogeneous in $(\tilde{v},\tilde{u})$.
\end{lemma}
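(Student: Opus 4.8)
The plan is to mirror the proof of \autoref{lem:A1homog} almost verbatim, since $\tilde f$ is defined by exactly the same extension procedure as $\tilde A$: it is the $\tilde v$-scaled pull-back of $f$ for $\tilde v>0$, extended to $\tilde v=0$ by a limit. First I would treat the case $\tilde v>0$ directly from the definition. For $\lambda>0$ we compute
\[
 \tilde f(\tilde t,\tilde y,\lambda\tilde v,\lambda\tilde u)
 = \lambda\tilde v\, f\!\left(\tilde t,\tilde y,\frac{\lambda\tilde u}{\lambda\tilde v}\right)
 = \lambda\tilde v\, f\!\left(\tilde t,\tilde y,\frac{\tilde u}{\tilde v}\right)
 = \lambda\,\tilde f(\tilde t,\tilde y,\tilde v,\tilde u),
\]
which is the claimed $1$-homogeneity on the open half-space $\{\tilde v>0\}$.

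Next I would pass to the boundary $\tilde v=0$. Here $\tilde f(\tilde t,\tilde y,0,\tilde u)$ is defined as $\lim_{\tilde v\searrow 0}\tilde v f(\tilde t,\tilde y,\tilde u/\tilde v)$, and one uses that a pointwise limit of a family of $1$-homogeneous functions (in $(\tilde v,\tilde u)$) remains $1$-homogeneous: substituting $\tilde v\mapsto\tilde v/\lambda$ in the limit,
\[
 \lambda\,\tilde f(\tilde t,\tilde y,0,\tilde u)
 = \lambda\lim_{\tilde v\searrow 0}\tilde v\, f\!\left(\tilde t,\tilde y,\frac{\tilde u}{\tilde v}\right)
 = \lim_{\tilde v\searrow 0}(\lambda\tilde v)\, f\!\left(\tilde t,\tilde y,\frac{\lambda\tilde u}{\lambda\tilde v}\right)
 = \tilde f(\tilde t,\tilde y,0,\lambda\tilde u),
\]
exactly as in \autoref{lem:A1homog}. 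This covers all $(\tilde v,\tilde u)$ with $\tilde v\ge 0$, which is the relevant domain.

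There is essentially no obstacle here: the only thing one must be slightly careful about is that the defining limit for $\tilde f$ at $\tilde v=0$ actually exists, but that is guaranteed by the first assumption in \autoref{def:assumptions} (existence of $\lim_{\lambda\to\infty} f(t,y,\lambda u)/\lambda$ for $\abs u=1$), together with the $1$-homogeneity of the approximants which lets one rescale to the unit sphere — so the limit defining $\tilde f(\tilde t,\tilde y,0,\tilde u)$ is well-defined and the manipulation above is legitimate. Hence I expect the proof to be a two-line computation, and would simply write ``Analogous to the proof of \autoref{lem:A1homog}'' if brevity were desired, but the explicit display above makes it self-contained.
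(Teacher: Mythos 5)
Your proof is correct and is exactly what the paper intends: its own proof of this lemma reads only ``The proof is identical to the one of \autoref{lem:A1homog},'' and your explicit two-case computation (direct cancellation for $\tilde v>0$, then passing the homogeneity through the defining limit at $\tilde v=0$) is precisely that argument spelled out. No gaps.
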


\begin{proof}
 The proof is identical to the one of \autoref{lem:A1homog}.
\end{proof}

\begin{lemma}[Reparametrization of integrands]  \label{lem:energyParamInvariance}
 The relaxed energy is invariant under orientation preserving reparametrization.
\end{lemma}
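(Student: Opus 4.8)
The plan is to mimic the proof of the earlier reparametrization lemma for the ODE, since the relaxed energy is built from the same kind of $1$-homogeneous data. Let $(\tilde{t},\tilde{y},\tilde{v},\tilde{u})$ be a solution on $[0,S]$ and let $\phi:[0,\hat{S}]\to[0,S]$ be a surjective, increasing Lipschitz map, giving the reparametrized solution $(\tilde{t}\circ\phi,\tilde{y}\circ\phi,(\tilde{v}\circ\phi)\phi',(\tilde{u}\circ\phi)\phi')$ as in the earlier lemma. I want to show $\tilde{\mathcal{F}}$ agrees on both. The boundary term is immediate: $(\tilde{t}\circ\phi)(\hat{S}) = \tilde{t}(\phi(\hat{S})) = \tilde{t}(S)$ since $\phi$ is surjective and increasing, so $g(\tilde{t}(S))$ is unchanged. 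It remains to compare the two integrals.

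The key step is the change of variables $s = \phi(\hat{s})$ combined with the $1$-homogeneity of $\tilde f$. For the reparametrized solution the integrand at $\hat s$ is
\[
\tilde{f}\bigl(\tilde{t}(\phi(\hat{s})),\tilde{y}(\phi(\hat{s})),\phi'(\hat{s})\tilde{v}(\phi(\hat{s})),\phi'(\hat{s})\tilde{u}(\phi(\hat{s}))\bigr)
= \phi'(\hat{s})\,\tilde{f}\bigl(\tilde{t}(\phi(\hat{s})),\tilde{y}(\phi(\hat{s})),\tilde{v}(\phi(\hat{s})),\tilde{u}(\phi(\hat{s}))\bigr),
\]
using $1$-homogeneity of $\tilde f$ in $(\tilde v,\tilde u)$ and $\phi'\geq 0$ a.e. Integrating over $[0,\hat S]$ and applying the change-of-variables formula for the Lipschitz monotone map $\phi$ (valid since $\phi' \in L^\infty$ and $\phi$ is surjective onto $[0,S]$) turns $\int_0^{\hat S}\phi'(\hat s)\,h(\phi(\hat s))\,d\hat s$ into $\int_0^S h(s)\,ds$ with $h(s)=\tilde f(\tilde t(s),\tilde y(s),\tilde v(s),\tilde u(s))$. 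Hence the two energy integrals coincide.

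The main obstacle is making the change of variables legitimate when $\tilde f$ can take the value $+\infty$ and when $\phi$ is merely Lipschitz and monotone (not a diffeomorphism), since then $\phi$ may be constant on intervals. On any interval where $\phi$ is constant one has $\phi'=0$ a.e., so by $1$-homogeneity the reparametrized integrand vanishes a.e.\ there and contributes nothing — matching the fact that such intervals map to a single point $s$ under $\phi$ and carry zero Lebesgue mass on the $[0,S]$ side; this is exactly the phenomenon already exploited in \autoref{lem:uniqNormalization}. For the $+\infty$ values, one notes that $\tilde f$ is bounded below (by coercivity, $\tilde f \geq -\tilde v \cdot c$, hence integrable-below after the change of variables since $\int\tilde v = T$), so the integral is well-defined in $(-\infty,+\infty]$ and the monotone change of variables applies to the positive and negative parts separately. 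I would state this carefully but not belabor it, since it is the same bookkeeping as in the ODE case.
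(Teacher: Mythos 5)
Your proof is correct and follows essentially the same route as the paper: the boundary term is unchanged by surjectivity of $\phi$, and the integral term is handled by pulling $\phi'$ out via the $1$-homogeneity of $\tilde f$ followed by a change of variables. The extra care you take with the intervals where $\phi'=0$ and with possible $+\infty$ values is a welcome refinement of details the paper glosses over, but it does not change the argument.
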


\begin{proof}
 Let $\phi: [0,\hat{S}] \to [0,S]$ be a piecewise continuously differentiable orientation preserving change of coordinates.
 The second term is trivial since $\tilde{t} \circ \phi (\hat{S}) = \tilde{t}(S)$. For the first term, we use the $1$-homogeneity of $\tilde{f}$ and simply perform a change of variables:
 \begin{align*}
  &\phantom{{}={}}\int_0^{\hat{S}} \tilde{f}(\tilde{t} \circ \phi, \tilde{y} \circ \phi, \tilde{v} \circ \phi \phi', \tilde{u} \circ \phi \phi') d\hat{s}\\
  &= \int_0^{\hat{S}} \tilde{f}(\tilde{t} \circ \phi, \tilde{y} \circ \phi, \tilde{v} \circ \phi , \tilde{u} \circ \phi ) \phi' d\hat{s} = \int_0^{S} \tilde{f}(\tilde{t} , \tilde{y} , \tilde{v}  , \tilde{u}  ) d\hat{s}\qedhere
 \end{align*}

\end{proof}

\begin{lemma}[Energy for associated solutions]
 Let $y,u$ be a solution to the original ODE and $\tilde{t},\tilde{y},\tilde{v},\tilde{u}$ an associated solution to the relaxed problem. Let $\mathcal{F}$ be an energy and $\tilde{\mathcal{F}}$ the corresponding relaxed energy. Then
 \[\mathcal{F}(y,u) = \tilde{\mathcal{F}}(\tilde{t},\tilde{y},\tilde{v},\tilde{u})\]
\end{lemma}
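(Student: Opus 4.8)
The plan is to prove the identity by a direct change of variables, exactly mirroring the computation carried out on the example in the introduction. Recall that for an associated solution we have $\tilde y(s) = y(\tilde t(s))$ and $\tilde u(s) = u(\tilde t(s))\tilde v(s)$ for almost all $s$, together with $\tilde t(0) = 0$, $\tilde t(S) = T$ and $\tilde t' = \tilde v \ge 0$. The boundary terms match trivially, since $g(\tilde t(S)) = g(T) = g(y(T))$ wait --- more precisely $g$ is evaluated at $\tilde y(S) = y(\tilde t(S)) = y(T)$, so $g(\tilde y(S)) = g(y(T))$. The real content is the equality of the integral terms, i.e.
\[
 \int_0^T f(t,y(t),u(t))\,dt = \int_0^S \tilde f(\tilde t(s),\tilde y(s),\tilde v(s),\tilde u(s))\,ds.
\]

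First I would treat the region where $\tilde v(s) > 0$. There, by \autoref{def:relaxedEnergy} and the relations defining an associated solution,
\[
 \tilde f(\tilde t(s),\tilde y(s),\tilde v(s),\tilde u(s)) = \tilde v(s)\, f\!\left(\tilde t(s), y(\tilde t(s)), \tfrac{\tilde u(s)}{\tilde v(s)}\right) = \tilde v(s)\, f\!\left(\tilde t(s), y(\tilde t(s)), u(\tilde t(s))\right),
\]
so this is precisely the integrand of the original energy pulled back along $t = \tilde t(s)$ and multiplied by the Jacobian $\tilde t'(s) = \tilde v(s)$. Applying the change of variables formula for the absolutely continuous, nondecreasing map $\tilde t$ (which is legitimate since $\tilde t\in W^{1,\infty}$ and the composition $s\mapsto f(\tilde t(s),y(\tilde t(s)),u(\tilde t(s)))$ is measurable and, by the coercivity/finiteness assumptions, integrable against $\tilde v$) converts $\int f(t,y(t),u(t))\,dt$ over $[0,T]$ into $\int \tilde v(s) f(\tilde t(s), y(\tilde t(s)), u(\tilde t(s)))\,ds$ over $\{\tilde v > 0\}$.

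Next I would argue that the set $\{\tilde v = 0\}$ contributes nothing on either side. On the $s$-side, one must check that $\int_{\{\tilde v = 0\}} \tilde f(\tilde t,\tilde y,0,\tilde u)\,ds = 0$; this follows because on $\{\tilde v=0\}$ the $1$-homogeneity of $\tilde f$ together with the associated-solution relation $\tilde u = u(\tilde t)\tilde v$ forces the relevant projective direction to behave consistently, but cleanest is to observe $\tilde f(\tilde t,\tilde y,0,\tilde u) = \lim_{\tilde v\searrow 0}\tilde v f(\tilde t,\tilde y,\tilde u/\tilde v)$ and, for an associated solution, $\tilde u(s)/\tilde v(s)\to$ nothing problematic --- in fact on $\{\tilde v = 0\}$ we also get $\tilde u = 0$ from the relation $\tilde u = u(\tilde t)\tilde v$, so $\tilde f(\tilde t,\tilde y,0,0) = \lim_{\tilde v\searrow 0}\tilde v f(\tilde t,\tilde y,0) = 0$ by the linear-growth bound on $f$. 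On the $t$-side there is nothing to subtract, since the change-of-variables identity already accounts for all of $[0,T]$: the image of $\{\tilde v = 0\}$ under $\tilde t$ has measure zero (a nondecreasing Lipschitz function maps the set where its derivative vanishes to a null set), so $\int_0^T = \int_{\tilde t(\{\tilde v>0\})}$ and the pullback is exactly the integral over $\{\tilde v > 0\}$.

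Combining the two regions gives the claimed equality of the integral terms, and with the boundary terms this yields $\mathcal F(y,u) = \tilde{\mathcal F}(\tilde t,\tilde y,\tilde v,\tilde u)$. The only subtlety I expect to need care is the justification of the change-of-variables formula when $\tilde t$ is merely absolutely continuous and nondecreasing rather than a diffeomorphism --- specifically the claim that $\tilde t$ sends $\{\tilde t' = 0\}$ to a null set and that the standard substitution $\int_{[0,T]} h\,dt = \int_{[0,S]} (h\circ \tilde t)\,\tilde t'\,ds$ holds for nonnegative (or integrable) measurable $h$; this is classical but worth citing, and it is the one place where the argument is not purely formal. Everything else is bookkeeping using the definitions of associated and relaxed objects and \autoref{lem:energyParamInvariance} is not even needed here, though it would let one reduce to the normalized parametrization first if desired.
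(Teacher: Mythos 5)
Your proof is correct, but it takes a genuinely different route from the paper's. The paper disposes of the lemma in one line: it invokes \autoref{lem:energyParamInvariance} to replace the given associated solution by the canonical parametrization $\tilde{t}(s)=s$, $\tilde{v}\equiv 1$, $\tilde{y}=y$, $\tilde{u}=u$ of \autoref{lem:associatedODEsolution}, for which $\tilde{f}=f$ pointwise and the identity is immediate. You instead keep the given parametrization and perform the substitution $t=\tilde{t}(s)$ by hand, splitting off the set $\{\tilde{v}=0\}$ and checking that it contributes nothing to either side (using that the associated-solution relation forces $\tilde{u}=u(\tilde{t})\tilde{v}=0$ there, hence $\tilde{f}(\cdot,\cdot,0,0)=0$). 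The two arguments are morally the same change of variables, but yours is the more careful one: the paper's reparametrization lemma is only proved for piecewise $C^1$ changes of variables, and realizing an arbitrary associated solution as such a reparametrization of the canonical one requires inverting $\tilde{t}$, which is delicate exactly on the vertical/waiting set $\{\tilde{v}=0\}$ that you treat explicitly. The price you pay is having to cite the substitution formula for absolutely continuous nondecreasing maps, which you correctly flag as the one nontrivial analytic ingredient. A small point in your favour: you silently correct the boundary term to $g(\tilde{y}(S))=g(y(T))$, which is what it must be for the statement to make sense, whereas the paper's definition and proof both write $g(\tilde{t}(S))$.
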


\begin{proof}
 According to \autoref{lem:energyParamInvariance}, we can use any parametrization, so we will use $\tilde{t}(s) = s, \tilde{v}=1,\tilde{y} = y,\tilde{u} = u$ as in \autoref{lem:associatedODEsolution}. Then
 \begin{align*}
  \tilde{\mathcal{F}}(\tilde{t},\tilde{y},\tilde{v},\tilde{u}) &= \int_0^S \tilde{f}(\tilde{t},\tilde{y},\tilde{v},\tilde{u})ds + g(\tilde{t}(S)) = \int_0^S \tilde{v} f(\tilde{t},\tilde{y},\tilde{u}/\tilde{v}) ds + g(\tilde{t}(S)) = \int_0^T f(s,y,u) ds + g(T) = \mathcal{F}(y,u). \qedhere
 \end{align*}
\end{proof}

The idea of the existence theory is classical, but the bounds on the original problem need to be translated into bounds for the relaxed problem. It turns out that this works quite well.

\begin{proposition}[Uniform bounds for almost minimizers] \label{lem:uniformBounds}
 Assume that conditions 1-3 from \autoref{def:assumptions} hold and fix $E_0>0$. Now let $(\tilde{t},\tilde{y},\tilde{v},\tilde{u})$ be a normalized solution to the relaxed problem on the interval $[0,S]$ with $\tilde{\mathcal{F}}(\tilde{t},\tilde{y},\tilde{v},\tilde{u}) < E_0$. Then $S \leq 2T + \frac{1}{c}E_0$ as well as
 \begin{align*}
  \norm[1]{\tilde{u}} &\leq \frac{1}{c}E_0 +T &
  \norm[1]{\tilde{v}} & = T\\
  \norm[\infty]{\tilde{t}} &\leq  2T + \frac{1}{c}E_0 & 
  \norm[\infty]{\tilde{y}} &\leq 4CT+ \frac{2C}{c}E_0+\abs{y_0} \\
  \norm[\infty]{\tilde{t}'} &\leq  1 & 
  \norm[\infty]{\tilde{y}'} &\leq 2C
 \end{align*}
 where $c$ is the constant of the lower bound on $f$ in condition 3 and $C$ is from the upper bound on $A$ in condition 2. Note that all those bounds only depend on $E_0$ and initial data of the problem.
\end{proposition}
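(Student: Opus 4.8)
The strategy is a chain of elementary estimates, each feeding into the next, starting from the lower bounds in \autoref{def:assumptions} and the normalization $\max(\tilde{v},\abs{\tilde{u}}) = 1$. I would order them as: first bound $\norm[1]{\tilde{v}}$ and $\norm[1]{\tilde{u}}$, then $S$, then the $L^\infty$ bounds on $\tilde{t},\tilde{t}',\tilde{y}',\tilde{y}$.

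The first identity $\norm[1]{\tilde{v}} = T$ is immediate: integrating $\tilde{t}' = \tilde{v}$ over $[0,S]$ and using $\tilde{t}(0)=0$, $\tilde{t}(S)=T$, together with $\tilde{v}\geq 0$, gives $\int_0^S \tilde{v}\,ds = T$. For $\norm[1]{\tilde{u}}$, I would use condition 3: since $\tilde{f}(\tilde{t},\tilde{y},\tilde{v},\tilde{u}) = \tilde{v} f(\tilde{t},\tilde{y},\tilde{u}/\tilde{v}) \geq \tilde{v}\, c(\abs{\tilde{u}/\tilde{v}} - 1) = c(\abs{\tilde{u}} - \tilde{v})$ (this inequality extends to $\tilde{v}=0$ by passing to the limit, giving $c\abs{\tilde{u}}$), and since $g$ is bounded below — here I would absorb the lower bound of $g$ into the constant, or more cleanly note the problem presumably normalizes $g\geq 0$; in any case $g(\tilde t(S))$ contributes a fixed constant — we get
\[
 E_0 > \tilde{\mathcal{F}} \geq \int_0^S c(\abs{\tilde{u}} - \tilde{v})\,ds + \inf g = c\,\norm[1]{\tilde{u}} - cT + \inf g,
\]
so $\norm[1]{\tilde{u}} \leq \frac{1}{c}E_0 + T$ (up to the harmless shift by $\inf g$, which the statement evidently suppresses by assuming $g\geq 0$). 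Next, for the bound on $S$: by normalization, for a.e.\ $s$ either $\tilde{v}(s)=1$ or $\abs{\tilde{u}(s)}=1$, hence $1 \leq \tilde{v}(s) + \abs{\tilde{u}(s)}$ pointwise a.e., so $S = \int_0^S 1\,ds \leq \norm[1]{\tilde{v}} + \norm[1]{\tilde{u}} \leq T + (\frac{1}{c}E_0 + T) = 2T + \frac{1}{c}E_0$.

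The remaining bounds follow by integrating the ODEs. Since $0\leq\tilde{v}\leq 1$ by normalization, $\norm[\infty]{\tilde{t}'} = \norm[\infty]{\tilde{v}} \leq 1$; and $\tilde{t}(s) = \int_0^s \tilde{v} \leq S \leq 2T + \frac{1}{c}E_0$, giving the bound on $\norm[\infty]{\tilde{t}}$. For $\tilde{y}'$, I would use the estimate from the proof of \autoref{prop:exODE}, namely $\abs{\tilde{A}(\tilde{t},\tilde{y},\tilde{v},\tilde{u})} \leq C(\abs{\tilde{u}}+\tilde v)$ wait — more precisely the computation there gives $\abs{\tilde A} \le C(\abs{\tilde u} + 1)\cdot$ nothing, let me recompute: $\abs{\tilde{v} A(\tilde t,\tilde y,\tilde u/\tilde v)} \le \tilde v\, C(\abs{\tilde u/\tilde v}+1) = C(\abs{\tilde u}+\tilde v)$, and by normalization both $\abs{\tilde u}\le 1$ and $\tilde v\le 1$, so $\abs{\tilde y'} = \abs{\tilde A} \le C(\abs{\tilde u}+\tilde v) \le 2C$, giving $\norm[\infty]{\tilde{y}'}\leq 2C$. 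Finally $\norm[\infty]{\tilde{y}} \leq \abs{y_0} + \int_0^S \abs{\tilde{y}'}\,ds \leq \abs{y_0} + \int_0^S C(\abs{\tilde u} + \tilde v)\,ds = \abs{y_0} + C(\norm[1]{\tilde u} + \norm[1]{\tilde v}) \leq \abs{y_0} + C\big(\frac{1}{c}E_0 + T + T\big)$ — hmm, that gives $\abs{y_0} + \frac{C}{c}E_0 + 2CT$, whereas the statement has $4CT + \frac{2C}{c}E_0 + \abs{y_0}$; the discrepancy is presumably because the cleaner bound $\abs{\tilde y'}\le 2C$ is used instead, yielding $\norm[\infty]{\tilde y} \le \abs{y_0} + 2CS \le \abs{y_0} + 2C(2T + \frac1c E_0) = \abs{y_0} + 4CT + \frac{2C}{c}E_0$, which matches exactly. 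So I would use the crude $\abs{\tilde y'}\le 2C$ together with $S \le 2T + \frac1c E_0$ for that last line.

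**Main obstacle.** There is no deep obstacle here — the proposition is a bookkeeping exercise. The only subtlety is handling the extension of the pointwise inequality $\tilde{f} \geq c(\abs{\tilde{u}}-\tilde{v})$ to the set $\{\tilde{v}=0\}$, which requires knowing the limit defining $\tilde f$ at $\tilde v = 0$ inherits the lower bound; this is a one-line limit argument. A secondary point of care is the role of $g$: the bound for $\norm[1]{\tilde u}$ as stated only works verbatim if $g\geq 0$ (or $\inf g = 0$), and otherwise picks up an additive constant depending on $\inf g$ — I would either note this explicitly or treat the lower bound of $g$ as having been normalized to zero, consistent with condition 3 of \autoref{def:assumptions} only requiring $g$ bounded below. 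Everything else is integration of the differential equations against the normalization constraint.
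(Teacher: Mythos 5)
Your proposal is correct and follows essentially the same route as the paper: derive $\norm[1]{\tilde v}=T$ from the boundary conditions, use the coercivity of $f$ (transferred to $\tilde f$ by homogeneity and the limit at $\tilde v=0$) to bound $\norm[1]{\tilde u}$, deduce the bound on $S$ from the normalization (the paper phrases this as a decomposition of $[0,S]$ into $\{\tilde v<1\}$ and $\{\tilde v=1\}$, which is equivalent to your pointwise inequality $1\leq\tilde v+\abs{\tilde u}$), and then integrate the ODEs using the growth bound on $A$. Your remarks about the additive constant from $\inf g$ and the exact constant in $c\abs{\tilde u}-c\tilde v$ versus $c\abs{\tilde u}-\tilde v$ correctly identify minor bookkeeping imprecisions that are present in the paper's own proof as well.
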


\begin{proof}
 As the solution is normalized, we have $\max(\tilde{v}(s),\abs{\tilde{u}(s)}) = 1$ and most importantly $\norm[\infty]{\tilde{u}} \leq 1$. We also note that $ \norm[1]{\tilde{t}'} = \norm[1]{\tilde{v}} = T$ per definition is always bounded.

From the lower bound on $f$ in condition 3, we get that
\[\tilde{f}(\tilde{t},\tilde{y},\tilde{v},\tilde{u}) = \tilde{v} f(\tilde{t},\tilde{y},\tilde{u}/\tilde{v}) \geq \tilde{v} c(\abs{\tilde{u}}/\tilde{v}-1) \geq c\abs{\tilde{u}}-\tilde{v}\]
for $\tilde{v} >0$ and then by the continuous extension also for $\tilde{v}=0$. Integrating yields
\[\int_0^S \tilde{f}(\tilde{t},\tilde{y},\tilde{v},\tilde{u}) ds \geq c\int_0^S \abs{\tilde{u}} ds - \int_0^S \tilde{v} ds = c \norm[1]{\tilde{u}}-T.\]

So as a result we get that $\norm[1]{\tilde{u}}$ is uniformly bounded.

As a consequence, the set 
\[\{s\in[0,S]: \tilde{v}(s) < 1\} \subset \{s\in[0,S]:\abs{\tilde{u}(s)}=1\}\] is bounded in mass by some constant $\norm[1]{\tilde{u}}$. This allows us to bound $S$, as then
\[S = \abs{\{s\in[0,S]: \tilde{v}(s) < 1\}} + \abs{\{s\in[0,S]: \tilde{v}(s) = 1\}} \leq \norm[1]{\tilde{u}}+T \leq 2T +\frac{1}{c}E_0.\] 

Furthermore from the upper bound on $A$ we get
\[\abs{\tilde{A}(\tilde{t},\tilde{y},\tilde{v},\tilde{u})} = \abs{\tilde{v} A(\tilde{t},\tilde{y},\tilde{u}/\tilde{v})} \leq C(1+\abs{\tilde{u}}) \leq 2C\]
for $\tilde{v}>0$ and by continuity also for $\tilde{v}=0$. So since $\tilde{y}' = \tilde{A}(\tilde{t},\tilde{y},\tilde{v},\tilde{u})$, we also get that $\norm[\infty]{\tilde{y}'} \leq 2C$ and thus $\norm[\infty]{\tilde{y}}\leq 2CS+\abs{y_0}$, which results in the stated bound. The same holds for $\tilde{t}_k$, since $\abs{\tilde{t}_k'(s)} = \abs{\tilde{v}_k(s)} \leq 1$ and thus $\tilde{t}(s) \leq S$.
\end{proof}

\begin{remark}[On problems with variable end-time] \label{rem:openEndTime}
 So far we only looked at problems where the final time $T$ is fixed. Of course this excludes a large class of optimal control problems, where $T$ depends in some way on the curve $y$, usually in the form of requiring $y(T)$ to reach a certain point or a set of points. It is however not hard to extend everything to this case as well.
 
 A reasonable assumption on those kind of problems is that a bounded energy also results in a bound for final time $T$, otherwise there might be no solution as $T\to\infty$ along a minimizing sequence. And since we have shown that for any minimizing sequence of normalized solutions, $S_k$ is uniformly bounded in terms of energy and $T$, this is condition is indeed enough to show existence in this case as well.
\end{remark}

\subsection{Existence in the convex case}\label{subsec:convex}

\begin{proposition}[Convexity of integrand]
 Let $f$ be an integrand satisfying conditions 3 and 3' in \autoref{def:assumptions}. (In particular that means that $f$ is continuous in $y$ and convex in $u$ with a minimum for $u=0$.) Then the relaxed integrand $\tilde{f}$ is also continuous in $\tilde{y}$ and convex in $\tilde{v}$ and $\tilde{u}$ each.
\end{proposition}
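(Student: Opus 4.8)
The plan is to handle the two claims -- continuity in $\tilde{y}$ and separate convexity in $\tilde{v}$ and in $\tilde{u}$ -- essentially independently, since the convexity in $\tilde{u}$ is the substantive part and the rest is routine. For continuity in $\tilde{y}$: for $\tilde{v}>0$ we have $\tilde{f}(\tilde{t},\tilde{y},\tilde{v},\tilde{u}) = \tilde{v} f(\tilde{t},\tilde{y},\tilde{u}/\tilde{v})$, which inherits continuity in $\tilde{y}$ directly from the equicontinuity of $f$ in $y$ assumed in condition 3; the $\tilde{v}=0$ case then follows by passing continuity through the defining limit $\tilde{v}\searrow 0$, using that the equicontinuity in $y$ is uniform so the limit and the $\tilde{y}$-modulus of continuity interchange.

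For convexity in $\tilde{v}$ (with $\tilde{t},\tilde{y},\tilde{u}$ fixed): the map $\tilde{v}\mapsto \tilde{v} f(\tilde{t},\tilde{y},\tilde{u}/\tilde{v})$ on $(0,\infty)$ is the \emph{perspective function} of the convex function $u\mapsto f(\tilde{t},\tilde{y},u)$, and the perspective of a convex function is jointly convex in $(\tilde{v},\tilde{u})$, hence in particular convex in $\tilde{v}$ alone; at $\tilde{v}=0$ the value is the recession-type limit, and lower semicontinuity of this limit together with convexity on the open ray gives convexity on $[0,\infty)$. The cleanest route is to invoke the standard fact that $\tilde{f}$, being the $1$-homogeneous extension of $\tilde{v}f(\cdot,\cdot,\cdot,\tilde{u}/\tilde{v})$ (this is exactly the structure noted after \autoref{def:relaxedEnergy} and used in \autoref{lem:A1homog}), is jointly convex in $(\tilde{v},\tilde{u})$ precisely because $f$ is convex in $u$; separate convexity in each variable is then immediate from joint convexity.

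For convexity in $\tilde{u}$ (with $\tilde{t},\tilde{y},\tilde{v}$ fixed): if $\tilde{v}>0$ this is just the statement that $\tilde{u}\mapsto \tilde{v} f(\tilde{t},\tilde{y},\tilde{u}/\tilde{v})$ is convex, which holds since it is the composition of the convex function $f(\tilde{t},\tilde{y},\cdot)$ with the affine map $\tilde{u}\mapsto \tilde{u}/\tilde{v}$, scaled by the positive constant $\tilde{v}$. For $\tilde{v}=0$ one takes $\tilde{u}_0,\tilde{u}_1$ and $\theta\in[0,1]$, writes $\tilde{f}(\tilde{t},\tilde{y},0,\tilde{u}_i) = \lim_{\tilde{v}\searrow 0}\tilde{v}f(\tilde{t},\tilde{y},\tilde{u}_i/\tilde{v})$, applies the $\tilde{v}>0$ convexity inside the limit, and passes to the limit -- here one should be slightly careful that the limit exists (guaranteed by condition 1) so that the inequality survives the limiting process rather than only a liminf bound.

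The main obstacle, such as it is, will be the behaviour at $\tilde{v}=0$: one must make sure that the one-sided limits defining $\tilde{f}$ there are genuine limits and that convexity inequalities pass through them cleanly, rather than merely producing semicontinuity. Once the perspective-function viewpoint is in place this is straightforward, but it is the only place where the argument is more than a one-line reduction to a known property of convex functions, so I would write that part out in full and treat everything else as immediate.
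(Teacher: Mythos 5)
Your proposal is correct, and for the only nontrivial part --- convexity in $\tilde{v}$ --- it takes a genuinely different route from the paper. The paper proves $\tilde{v}$-convexity by a direct computation: it bounds $(\lambda\tilde{v}_1+(1-\lambda)\tilde{v}_2)f\bigl(\tilde{t},\tilde{y},\tfrac{\tilde{u}}{\lambda\tilde{v}_1+(1-\lambda)\tilde{v}_2}\bigr)$ by $\lambda\tilde{v}_1 f\bigl(\tilde{t},\tilde{y},\tfrac{\tilde{u}}{\lambda\tilde{v}_1}\bigr)+(1-\lambda)\tilde{v}_2 f\bigl(\tilde{t},\tilde{y},\tfrac{\tilde{u}}{(1-\lambda)\tilde{v}_2}\bigr)$, using that $l\mapsto f(\tilde{t},\tilde{y},l\tilde{u})$ is convex with minimum at $0$ and hence monotone on $l\geq 0$; so the paper's argument genuinely leans on condition 3'. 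Your route via the perspective function avoids this entirely: joint convexity of $(\tilde{v},\tilde{u})\mapsto\tilde{v}f(\tilde{t},\tilde{y},\tilde{u}/\tilde{v})$ follows from convexity of $f(\tilde{t},\tilde{y},\cdot)$ alone, and separate convexity in each variable is an immediate corollary. This buys two things: (i) the minimum-at-$0$ part of 3' is not needed for this step, and (ii) you obtain \emph{joint} convexity in $(\tilde{v},\tilde{u})$, which is strictly stronger than the stated conclusion and is in fact the more natural hypothesis for the Tonelli-type lower semicontinuity invoked later in \autoref{thm:existConv}. The parts you treat as routine (continuity in $\tilde{y}$, convexity in $\tilde{u}$ by composition with an affine map, passage to $\tilde{v}=0$ through the limit guaranteed by condition 1) match the paper's handling; your caution about the $\tilde{v}=0$ extension is well placed and is exactly where the details should be written out, but there is no gap.
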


\begin{proof}
 For fixed $\tilde{v}>0$, the integrand $\tilde{f}$ is just a rescaling of $f$, so we inherit the continuity in $y$ and the convexity in $\tilde{u}$. The convexity of $\tilde{u}$ for $\tilde{v}=0$ is also inherited directly:
 \begin{align*}
  \tilde{f}(\tilde{t},\tilde{y},0,\lambda \tilde{u}_1 + (1-\lambda) \tilde{u}_2) &= \lim_{\tilde{v}\to 0} \tilde{v} f\left(\tilde{t},\tilde{y},\frac{\lambda \tilde{u}_1 + (1-\lambda) \tilde{u}_2}{\tilde{v}}\right)\\
  &\leq \lim_{\tilde{v}\to 0} \lambda \tilde{v} f\left(\tilde{t},\tilde{y},\frac{\tilde{u}_1}{\tilde{v}}\right) + (1-\lambda) \tilde{v} f\left(\tilde{t},\tilde{y},\frac{\tilde{u}_2}{\tilde{v}}\right)\\
  &= \lambda \tilde{f}(\tilde{t},\tilde{y},0,\tilde{u}_1)+ (1-\lambda) \tilde{f}(\tilde{t},\tilde{y},0,\tilde{u}_2)
 \end{align*}
 
 This leaves the convexity in $\tilde{v}$. First let $\tilde{v}_1,\tilde{v}_2 >0$. Then
 \begin{align*}
  \tilde{f}(\tilde{t},\tilde{y},\lambda \tilde{v}_1 + (1-\lambda) \tilde{v}_2,\tilde{u}) &= (\lambda \tilde{v}_1 + (1-\lambda) \tilde{v}_2)f\left(\tilde{t},\tilde{y},\frac{1}{\lambda \tilde{v}_1 + (1-\lambda) \tilde{v}_2} \tilde{u}\right)\\
 &\leq \lambda \tilde{v}_1f\left(\tilde{t},\tilde{y},\frac{1}{\lambda \tilde{v}_1} \tilde{u}\right)+(1-\lambda)\tilde{v}_2 f\left(t,\tilde{y},\frac{1}{(1-\lambda) \tilde{v}_2} \tilde{u}\right) \\&= \lambda \tilde{f}(\tilde{t},\tilde{y},\tilde{v}_1,\tilde{u})+ (1-\lambda) \tilde{f}(\tilde{t},\tilde{y},\tilde{v}_2,\tilde{u})
 \end{align*}
 where we use that $l \mapsto f(\tilde{t},\tilde{y},l\tilde{u})$ is convex with a minimum in $0$ and thus monotone for $l\geq 0$. Finally the case $\tilde{v}_1 =0$ or $\tilde{v}_2=0$ follows by taking the limit.
\end{proof}

\begin{theorem}[Existence of solutions (convex case)] \label{thm:existConv}
 Consider the initial optimal control problem. Assume that the conditions 1-3 and 3' from \autoref{def:assumptions} hold. Then the relaxed problem has a solution.
\end{theorem}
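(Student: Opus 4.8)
The plan is to run the direct method of the calculus of variations on the relaxed problem, using \autoref{lem:uniformBounds} to extract a convergent (sub)sequence of normalized almost-minimizers and the convexity established in the preceding proposition together with \autoref{prop:exODE} to pass to the limit. First I would observe that the relaxed infimum is finite: by \autoref{lem:associatedODEsolution} and the following lemma on energies for associated solutions, any admissible $(y,u)$ for the original problem gives an admissible relaxed solution with the same energy, and the original problem has finite infimum because condition 3 gives a lower bound $f \geq c(\abs{u}-1)$ and $g$ is bounded below, so the relaxed infimum $m := \inf \tilde{\mathcal{F}}$ is a real number (bounded below by $-cT - \inf g$ via the same computation as in the proof of \autoref{lem:uniformBounds}). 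Fix $E_0 > m+1$ and take a minimizing sequence; after reparametrizing each term via \autoref{lem:uniqNormalization}, we may assume each $(\tilde t_k,\tilde y_k,\tilde v_k,\tilde u_k)$ is normalized, defined on $[0,S_k]$, with $\tilde{\mathcal{F}} < E_0$.

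Next I would handle the fact that the domains $[0,S_k]$ vary. By \autoref{lem:uniformBounds} we have $S_k \leq 2T + E_0/c =: S_\infty$, so after passing to a subsequence $S_k \to S \leq S_\infty$. Rather than fuss over shrinking intervals, I would extend each normalized solution to the fixed interval $[0,S_\infty]$ by appending a trivial vertical-in-neither-direction piece --- more precisely, reparametrize so that all solutions live on $[0,S_\infty]$ with $\tilde v \equiv 0$, $\tilde u \equiv 0$ on the tail $[S_k, S_\infty]$; this keeps $\tilde t$ and $\tilde y$ constant there (by $1$-homogeneity of $\tilde A$, cf. the argument in \autoref{lem:uniqNormalization}), changes nothing in $\tilde{\mathcal{F}}$ since $\tilde f(\tilde t,\tilde y,0,0)=0$, and preserves $\tilde t(S_\infty) = T$. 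Now all controls are in the fixed ball $\{\abs{\tilde u}\leq 1\}\times\{0\leq \tilde v\leq 1\}$ of $L^\infty([0,S_\infty])$, so by Banach--Alaoglu there is a subsequence with $\tilde v_k \rightharpoonup^\star \tilde v$, $\tilde u_k \rightharpoonup^\star \tilde u$; the constraint $\norm[1]{\tilde v_k} = T$ passes to the limit since it is weak-$\star$ continuous against the constant test function $1$. By the uniform $W^{1,\infty}$ bounds in \autoref{lem:uniformBounds}, $\tilde t_k \to \tilde t$ and $\tilde y_k \to \tilde y$ uniformly (Arzel\`a--Ascoli) along a further subsequence, and $\tilde t(S_\infty)=T$, $\tilde t(0)=0$, $\tilde y(0)=y_0$ are inherited.

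The crucial step is showing the limit is admissible, i.e. $\tilde y' = \tilde A(\tilde t,\tilde y,\tilde v,\tilde u)$, and here the main obstacle is that $A$ is not assumed linear, so $(\tilde v,\tilde u)\mapsto\tilde A(\tilde t,\tilde y,\tilde v,\tilde u)$ is only convex and $1$-homogeneous in $(\tilde v,\tilde u)$, not linear, hence weak-$\star$ convergence of the controls does not immediately pass through it --- this is exactly the gap \autoref{prop:exODE} left open in the nonlinear case. I would close it by the standard convexity/lower-semicontinuity device applied componentwise: writing the ODE in integrated form $\tilde y_k(s) = y_0 + \int_0^s \tilde A(\tilde t_k,\tilde y_k,\tilde v_k,\tilde u_k)\,d\sigma$, the integrand differs from $\tilde A(\tilde t,\tilde y,\tilde v_k,\tilde u_k)$ by something tending to $0$ uniformly (Lipschitz continuity of $\tilde A$ in $(\tilde t,\tilde y)$, uniform convergence of $\tilde t_k,\tilde y_k$ --- the same split as in the proof of \autoref{prop:exODE}), so it suffices to understand weak-$\star$ limits of $\tilde A(\tilde t,\tilde y,\tilde v_k,\tilde u_k)$. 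Augmenting the state by one scalar coordinate whose derivative is $\tilde f(\tilde t,\tilde y,\tilde v_k,\tilde u_k)$, I would apply a Tonelli--Ioffe-type weak lower semicontinuity theorem to the convex, $1$-homogeneous (hence genuinely lower-semicontinuous in the relaxed, measure sense) integrand $(\tilde v,\tilde u)\mapsto \bigl(\tilde A(\tilde t,\tilde y,\tilde v,\tilde u),\ \tilde f(\tilde t,\tilde y,\tilde v,\tilde u)\bigr)$: convexity forces the weak-$\star$ limit of the $\tilde A$-terms to lie, pointwise a.e., in the appropriate subdifferential-described set, and since $\tilde A$ is additionally $1$-homogeneous with values on a curve with no ``hidden'' concentrations (the controls are already $L^\infty$-bounded), the limit of $\tilde A(\tilde t,\tilde y,\tilde v_k,\tilde u_k)$ is exactly $\tilde A(\tilde t,\tilde y,\tilde v,\tilde u)$, giving admissibility, while simultaneously $\liminf \int_0^{S_\infty}\tilde f(\tilde t_k,\tilde y_k,\tilde v_k,\tilde u_k)\,ds \geq \int_0^{S_\infty}\tilde f(\tilde t,\tilde y,\tilde v,\tilde u)\,ds$. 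Combining this with continuity of $g$ and uniform convergence $\tilde t_k(S_\infty)\to \tilde t(S_\infty)$ yields $\tilde{\mathcal{F}}(\tilde t,\tilde y,\tilde v,\tilde u) \leq \liminf \tilde{\mathcal{F}}(\tilde t_k,\dots) = m$, so the limit is a minimizer. (I expect the paper instead isolates a clean lemma ``if $A$ convex and $1$-homogeneous then $\tilde A(\cdot,\cdot,\tilde v_k,\tilde u_k)\rightharpoonup^\star \tilde A(\cdot,\cdot,\tilde v,\tilde u)$'' and cites it here; the substance is the same.)
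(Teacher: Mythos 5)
Your overall architecture --- finiteness of the relaxed infimum via associated solutions, normalization via \autoref{lem:uniqNormalization}, the uniform bounds of \autoref{lem:uniformBounds}, extension by $(\tilde v,\tilde u)=(0,0)$ to a common interval using $1$-homogeneity, Banach--Alaoglu plus Arzel\`a--Ascoli, and Tonelli-type lower semicontinuity of $\int\tilde f$ from the convexity proposition --- is exactly the paper's proof, and you are right to single out the passage to the limit in the ODE as the one step that the linear-case statement of \autoref{prop:exODE} does not cover. However, your proposed repair of that step contains a genuine error. You assert that because $(\tilde v,\tilde u)\mapsto \tilde A(\tilde t,\tilde y,\tilde v,\tilde u)$ is ``convex and $1$-homogeneous'', the weak-$\star$ limit of $\tilde A(\tilde t,\tilde y,\tilde v_k,\tilde u_k)$ must equal $\tilde A(\tilde t,\tilde y,\tilde v,\tilde u)$. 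First, no convexity of $A$ is assumed anywhere in \autoref{def:assumptions} --- condition 3' concerns only $f$ --- and $\tilde A$ is $\R^n$-valued, so componentwise convexity would be a strong extra hypothesis. Second, even granting convexity, Jensen's inequality for the generated Young measure $\mu_s$ gives only $\int \tilde A\,d\mu_s \geq \tilde A\bigl(\int (\tilde v,\tilde u)\,d\mu_s\bigr)$ componentwise, with equality only when $\mu_s$ sits where $\tilde A$ is affine; neither convexity nor $1$-homogeneity upgrades a weak-$\star$ limit to pointwise commutation with a nonlinear map. A concrete counterexample is supplied by the paper's own \autoref{ex:updown}: there $\tilde A$ contains the component $\abs{\tilde u}$, and for $\tilde u_k$ oscillating between $\pm 1$ one has $\tilde u_k \rightharpoonup^\star 0$ while $\abs{\tilde u_k}\rightharpoonup^\star 1 \neq \abs{0}$, so the equation $\tilde y_1'=\abs{\tilde u}$ does \emph{not} survive the weak-$\star$ limit --- this oscillation failure is precisely what forces the Young-measure formulation of \autoref{sec:Young}. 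The honest dichotomy is: if $A$ is linear (or affine) in $u$, so that $\tilde A$ is linear in $(\tilde v,\tilde u)$, then \autoref{prop:exODE} closes the argument; otherwise one must either pass to the fully relaxed problem or invoke a Filippov/Cesari-type convexity of the orientor field $\{(\tilde A(t,y,v,u),z): z\geq \tilde f(t,y,v,u)\}$ together with a measurable selection yielding a possibly different admissible control realizing the limit velocity. To be fair, the paper's own proof dismisses this step with the phrase ``by continuity'', so you have identified a real subtlety in the argument --- but your attempted closure, as written, does not close it.
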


\begin{proof}
 We pick a minimizing sequence $(\tilde{t}_k,\tilde{y}_k,\tilde{v}_k,\tilde{u}_k)$ to the relaxed problem. Since the problem is invariant under reparametrization, we can choose this sequence to be normalized. We then apply \autoref{lem:uniformBounds} to get uniform bounds on that sequence. 

In particular this gives us a uniform bound on $S_k$. Now by the $1$-homogeneity, we have $\tilde{f}(\tilde{t}_k,\tilde{y}_k,0,0)= 0$ and $\tilde{y}_k' = \tilde{A}(\tilde{t}_k,\tilde{y}_k,0,0) = 0$. So we can extend all solutions by $\tilde{v}_k(s)=0,\tilde{u}_k(s)=0$ for $s > S_k$ to the same interval $[0,S]$ where $S:= \sup_{k\in\N} S_k$, without changing any of the associated norms.

We now use those bounds to extract a subsequence (not relabeled) and find functions $\tilde{t},\tilde{y},\tilde{v},\tilde{u}$ for which
\begin{align*}
 \tilde{t}_k &\to \tilde{t} \quad \text{ uniformly} \\
 \tilde{y}_k &\to \tilde{y} \quad \text{ uniformly} \\
 \tilde{v}_k &\rightharpoonup^* \tilde{v} \quad \text{in $L^\infty$} \\
 \tilde{u}_k &\rightharpoonup^* \tilde{u} \quad \text{in $L^\infty$} 
\end{align*}

Now since $\tilde{f}$ is continuous in $\tilde{t}$ and $\tilde{y}$ and convex in $\tilde{v}$ and $\tilde{u}$, by Tonelli's theorem $\tilde{\mathcal{F}}$ is lower semicontinuous under those convergences and so
\[\tilde{\mathcal{F}}(\tilde{t},\tilde{y},\tilde{v},\tilde{u}) \leq \liminf_{k\to\infty} \tilde{\mathcal{F}}(\tilde{t}_k,\tilde{y}_k,\tilde{v}_k,\tilde{u}_k) = \inf \tilde{\mathcal{F}}.\]
Furthermore by continuity
\[\tilde{y}' = \tilde{A}(\tilde{t},\tilde{y},\tilde{v},\tilde{u})\]
and by uniform convergence $\tilde{y}(0) = 0$ and $t(S) = T$, so $(\tilde{t},\tilde{y},\tilde{v},\tilde{u})$ is indeed a minimizer.
\end{proof}

\begin{remark}[On uniqueness] \label{rem:uniqueness}
 In general we cannot expect the relaxed problem to have an unique minimizer. Even if the original integrand is strictly convex, the relaxed integrand will not be. Not only is it invariant under reparametrizations but even if we normalize all solutions, the convexity might also no longer be strict in the limit $\tilde{v}\to 0$. The obvious way to avoid this would be to require uniform convexity of the original integrand, however this would imply a superlinear growth for $u\to \infty$, which precludes exactly the kind of concentrations for which this relaxation was required in the first place.
\end{remark}

\section{The nonconvex case and Young's generalized curves} \label{sec:Young}

To a reader familiar with relaxation methods used in the calculus of variations, it should be clear how to use Young-measures to prove an existence result for the non-convex case. We simply apply the classic techniques to the space-time relaxed problem. We would however still like to discuss those methods in some more details, as there are interesting connections to Young's generalized curves \cite{youngGeneralizedCurvesExistence1937} which are firmly at the root of everything that is done under the name of Young-measures today.

\subsection{Young measures and Generalized curves} \label{subsec:genCurves}

In this section we will repeat the idea of $L^\infty$-Young measures as well as Young's original concept of generalized curves. We will slightly deviate from his original definition by not normalizing (see \autoref{rem:YoungArclength}), but this is mostly a choice of convenience to highlight some of the features of the problem as the underlying mathematics are fundamentally the same.

\begin{definition}[$L^\infty$-Young measure]
 We call a family $\mu_s$, $s\in [0,S]$ of $\R^k$-valued Radon measure an $L^\infty$-Young measure\footnote{Which along with using $\mu_s$ to designate the whole family is a common abuse of notation.}, if there is a sequence $(u_k)_k:[0,S] \to \R^k$ of equibounded $L^\infty$ functions such that $\delta_{u_k(s)} \rightharpoonup \mu_s$ in the sense of measures for almost all $s\in [0,S]$.
\end{definition}

\begin{remark}
 It follows directly from the definition that $\supp \mu_s$ is bounded in some circle $B_R(0)$ where $R = \sup_{k\in\N} \norm[\infty]{u_k}$. It is also not hard to see that $\mu_s(\R^n) = 1$ for almost all $s\in[0,S]$. Because of this, $\mu_s$ is often defined to be a probability measure. While in some applications this is the right intuition, one should keep in mind that a priori there is no probability theory involved in the definition. It is equaly possible to interpret a Young measure as a function taking multiple values at once, where $\mu_s$ just indicates ``how much'' certain values are taken. 
\end{remark}

The following technical lemma will be quite useful in the sequel.

\begin{lemma}[Convergence of integrals] \label{lem:YoungConvInt}
 Let $\mu_s$ be an $L^\infty$-Young measure generated by $(u_k)_k$ and let $f_k: \R^k \to \R$ be a sequence of bounded continuous functions converging uniformly to $f:\R^k\to\R$. Then
 \[\int_{s_0}^{s_1} f_k(u_k(s)) ds \to \int_{s_0}^{s_1} \int_{\R^k} f(u) \,d\mu_s(u) ds\]
 for all $s_0,s_1 \in [0,S]$.
\end{lemma}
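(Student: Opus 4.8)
The plan is to reduce the claim to the defining property of the Young measure — namely that $\int_{s_0}^{s_1} g(u_k(s))\,ds \to \int_{s_0}^{s_1}\int_{\R^k} g(u)\,d\mu_s(u)\,ds$ for a \emph{fixed} bounded continuous $g$ — and then handle the fact that the integrand $f_k$ itself varies with $k$ by a standard uniform-approximation (triangle inequality) argument. So the first step is to recall/justify why the assertion holds for a single fixed bounded continuous $g$: this is the classical fundamental theorem on Young measures applied on the bounded set $B_R(0)$ containing $\supp\mu_s$ for all $s$, where $R := \sup_k \norm[\infty]{u_k} < \infty$; since $\delta_{u_k(s)}\rightharpoonup\mu_s$ for a.e.\ $s$ and $g$ is bounded, dominated convergence (the functions $s\mapsto \int g\,d\delta_{u_k(s)} = g(u_k(s))$ are bounded by $\norm[\infty]{g}$ on the finite-measure interval) gives convergence of the $s$-integrals.

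Next I would split, for the varying integrand, the difference into two pieces:
\begin{align*}
 \abs*{\int_{s_0}^{s_1} f_k(u_k(s))\,ds - \int_{s_0}^{s_1}\int_{\R^k} f(u)\,d\mu_s(u)\,ds}
 &\leq \int_{s_0}^{s_1} \abs{f_k(u_k(s)) - f(u_k(s))}\,ds\\
 &\quad + \abs*{\int_{s_0}^{s_1} f(u_k(s))\,ds - \int_{s_0}^{s_1}\int_{\R^k} f(u)\,d\mu_s(u)\,ds}.
\end{align*}
The first term is bounded by $(s_1-s_0)\norm[\infty]{f_k - f} \to 0$ by the assumed uniform convergence $f_k \to f$; note this also shows $f$ is bounded and continuous, so it is a legitimate test function for the Young measure. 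The second term tends to $0$ by the fixed-integrand case applied to $g = f$. Adding the two estimates yields the claim.

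There is essentially no serious obstacle here; the lemma is a routine packaging of the fundamental theorem of Young measures. The only point requiring a little care is the one already highlighted in the preceding remark: one must invoke the uniform $L^\infty$-bound $R = \sup_k \norm[\infty]{u_k}$ so that all the relevant masses are supported in the fixed compact ball $B_R(0)$ — this is what makes boundedness of $f$ (rather than, say, growth conditions) sufficient, and it is what lets dominated convergence on the finite interval $[s_0,s_1]$ do the work. I would state the fixed-$g$ convergence as a consequence of the standard theory (or a one-line dominated-convergence argument from the a.e.\ weak-$*$ convergence of $\delta_{u_k(s)}$) and spend the bulk of the written proof on the two-term triangle-inequality split above.
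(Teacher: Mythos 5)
Your proof is correct and takes essentially the same route as the paper's: the identical triangle-inequality split, with the first term bounded by $(s_1-s_0)\norm[\infty]{f_k-f}$ and the second handled via the a.e.\ weak-$*$ convergence $\delta_{u_k(s)}\rightharpoonup\mu_s$ on the fixed ball $B_R(0)$ together with dominated convergence. Your write-up merely makes explicit the dominated-convergence step that the paper leaves implicit.
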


\begin{proof}
Since $u_k$ and $\mu_s$ are bounded, we can assume $f_k$ and $f$ to have compact support. Then we can estimate
 \begin{align*}
  &\phantom{{}={}} \abs{\int_{s_0}^{s_1} f_k(u_k(s)) ds- \int_{s_0}^{s_1} \int_{\R^k} f(u) d\mu_s(u) ds} \\
  &\leq \abs{\int_{s_0}^{s_1} f_k(u_k(s)) - f(u_k(s)) ds}+ \abs{\int_{s_0}^{s_1} f(u_k(s)) ds- \int_{s_0}^{s_1} \int_{\R^k} f(u) d\mu_s(u) ds} \\
  &\leq (s_1-s_0)\norm[\infty]{f_k-f} + \abs{ \int_{s_0}^{s_1} \int_{\R^k} f(u) d(\delta_{u_k(s)} - \mu_s)(u) ds}\to 0 \qedhere
 \end{align*}
\end{proof}

\begin{definition}[Generalized curves]
 Let $A \subset \R^n$ and $S > 0$. We call a pair $\gamma:[0,S] \to A, \nu_s: [0,S] \to M(\R^n)$ a generalized curve in $A$, if the following conditions hold:
 \begin{enumerate} 
  \item $\gamma$ is a continuous function.
  \item $\nu_s(\R^n) = 1$ for almost all $s\in [0,S]$
  \item For any $s_0,s_1 \in [0,S]$ we have
  \[\int_{s_0}^{s_1} \int_{\R^n} v d\nu_s(v) ds = \gamma(s_1) - \gamma(s_0) \]
 \end{enumerate}
\end{definition}

In our case, thanks to the space-time relaxation, we will only have to deal with curves where $\gamma$ is Lipschitz and the $\nu_s$ are uniformly bounded. We can immediately show that the latter implies the former:

\begin{lemma}[Bounded generalized curves] \label{lem:genCurveLipsch}
 Let $(\gamma,\nu_s)$ be a generalised curve on $[0,S]$ such that the $\nu_s$ are uniformly bounded, i.e. $\supp \nu_s \subset B_R(0)$. Then $\gamma$ is Lipschitz with constant $L\leq R$ and for almost all $s\in [0,S]$ we have 
 \[\dot{\gamma}(s) = \int_{R^m} v d\nu_s(v).\]
\end{lemma}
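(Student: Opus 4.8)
The plan is to show first that $\gamma$ is Lipschitz with constant at most $R$ using the defining integral identity (condition 3 of a generalized curve), and then to identify the weak derivative using the Lebesgue differentiation theorem. For the Lipschitz estimate, fix $s_0 < s_1$ in $[0,S]$ and apply condition 3 together with $\supp\nu_s\subset B_R(0)$: since each $\nu_s$ is a probability measure supported in $B_R(0)$, the inner integral $\int_{\R^n} v\, d\nu_s(v)$ has modulus at most $R$ for almost every $s$, hence
\[\abs{\gamma(s_1)-\gamma(s_0)} = \abs{\int_{s_0}^{s_1}\int_{\R^n} v\, d\nu_s(v)\, ds} \leq \int_{s_0}^{s_1} \abs[\Big]{\int_{\R^n} v\, d\nu_s(v)}\, ds \leq R\,(s_1-s_0).\]
This gives the Lipschitz bound, and by Rademacher's theorem (or simply the fact that a Lipschitz function on an interval is absolutely continuous) $\gamma$ is differentiable almost everywhere with $\dot\gamma \in L^\infty$ and $\gamma(s_1)-\gamma(s_0) = \int_{s_0}^{s_1}\dot\gamma(s)\,ds$.

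Next I would combine the two integral representations of $\gamma(s_1)-\gamma(s_0)$. Define $h(s) := \int_{\R^n} v\, d\nu_s(v)$, which is a bounded measurable function of $s$ (measurability follows because $\nu_s$ arises as a weak limit of the Dirac masses $\delta_{u_k(s)}$, or more directly from the measurability assumptions implicit in the definition of a generalized curve — in any case $s\mapsto \nu_s$ is weak-$\star$ measurable). Then for every pair $s_0 < s_1$ we have $\int_{s_0}^{s_1}\dot\gamma(s)\,ds = \int_{s_0}^{s_1} h(s)\,ds$, so $\int_{s_0}^{s_1}(\dot\gamma - h)\,ds = 0$ for all subintervals. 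A standard argument — the set function $E\mapsto \int_E(\dot\gamma-h)$ vanishes on all intervals, hence on all Borel sets by a monotone class / $\pi$-$\lambda$ argument, hence $\dot\gamma = h$ almost everywhere — yields $\dot\gamma(s) = \int_{\R^n} v\, d\nu_s(v)$ for a.e.\ $s\in[0,S]$. Alternatively one invokes the Lebesgue differentiation theorem directly: at every Lebesgue point $s$ of the integrable function $h$, $\frac{1}{\varepsilon}\int_s^{s+\varepsilon} h(\sigma)\,d\sigma \to h(s)$, and the left side equals $\frac{1}{\varepsilon}(\gamma(s+\varepsilon)-\gamma(s)) \to \dot\gamma(s)$ wherever $\gamma$ is differentiable, so the two limits coincide a.e.

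The argument is essentially routine; there is no serious obstacle. The only point requiring a little care is the measurability of $s\mapsto \int_{\R^n} v\, d\nu_s(v)$, which is needed both to make sense of the integrals and to apply Lebesgue differentiation. This is guaranteed by the way generalized curves arise here (as weak-$\star$ limits of sequences of equibounded $L^\infty$ functions, cf.\ the definition of $L^\infty$-Young measures and \autoref{lem:YoungConvInt} applied with $f(v) = v$ componentwise), so I would either note this explicitly or simply take weak-$\star$ measurability of the family $\nu_s$ as part of the standing definition. Everything else — the triangle inequality for Bochner/vector integrals, Rademacher, Lebesgue differentiation — is standard.
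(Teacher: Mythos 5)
Your proof is correct and takes essentially the same route as the paper: the Lipschitz bound follows from condition 3 together with the bounded support of $\nu_s$, and the derivative is then identified with $\int_{\R^n} v\,d\nu_s(v)$ because the two agree when integrated over every subinterval. Your extra care about the measurability of $s\mapsto\int_{\R^n} v\,d\nu_s(v)$ is a sensible refinement of a point the paper leaves implicit, not a different argument.
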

\begin{proof}
Per definition we have for any $s_0,s_1 \in [0,S]$:
\begin{align*}
  \abs{\gamma(s_1) - \gamma(s_0)} &= \abs{\int_{s_0}^{s_1} \int_{\R^n} v d\nu_s(v)ds}\\
  \leq \int_{s_0}^{s_1} \int_{\R^n} \abs{v} d\nu_s(v)ds
  &\leq \int_{s_0}^{s_1} \int_{\R^n} R d\nu_s(v)ds = R \abs{s_1-s_0}
 \end{align*}
 so $\gamma$ is Lipschitz with the correct constant. But then $\dot{\gamma}$ exists a.e. and by condition 3 we see that $\dot{\gamma}$ is identical to $\int_{\R^n} v d\nu_s(v)$ when integrated over any time interval and thus also identical for almost all $s$.
\end{proof}

The converse is not true. As the only connection between $\gamma$ and $\nu_s$ is given by  mean, we can find a family $\mu_s$ with arbitrary large support but a mean of $0$. Then $(0,\mu_s)$ is a generalized curve. However a Lipschitz-curve always induces a generalized curve with bounded support.

\begin{lemma}[Associated generalized curve]
 If $\gamma: [0,S] \to \R^n$ is a $W^{1,\infty}$-curve then $(\gamma,\nu_s)$, where $\nu_s = \delta_{\dot{\gamma}(s)}$, is a generalized curve (called the associated generalized curve).
\end{lemma}

\begin{proof}
 Conditions 1 and 2 are trivially fulfilled. For condition 3, we note that
 \[\int_{s_0}^{s_1} \int_{\R^n} v d\nu_s(v) ds = \int_{s_0}^{s_1} \dot{\gamma} ds = \gamma(s_1) - \gamma(s_0).\qedhere\]
\end{proof}

\begin{lemma}[Reparametrization of generalized curves]
 Let $(\gamma,\nu)$ be a generalized curve and $\phi:[0,\hat{S}] \to [0,S]$ a Lipschitz-continuous, monotone change of coordinates. Then the reparametrization $(\hat{\gamma},\hat{\nu}_s)$ is also a generalised curve, where $\hat{\gamma} = \gamma \circ \phi$ and
 \[\hat{\nu}_s: A \mapsto \nu_{\phi(s)}\left(\phi'(s)^{-1} A\right). \]
\end{lemma}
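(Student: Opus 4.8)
The plan is to verify the three defining conditions of a generalized curve for the pair $(\hat\gamma,\hat\nu_s)$, keeping in mind that $\hat\nu_s$ should be read as the push-forward of $\nu_{\phi(s)}$ under the scaling map $v\mapsto \phi'(s) v$, i.e.\ $\int_{\R^n} h(v)\,d\hat\nu_s(v) = \phi'(s)^{-1}\!\cdot\!(\text{nothing})$... more precisely $\int h\, d\hat\nu_s = \int h(\phi'(s)^{-1} w)\, d\nu_{\phi(s)}(w)$ after the change of variables $w = \phi'(s) v$. Condition 1 is immediate: $\hat\gamma = \gamma\circ\phi$ is a composition of continuous (indeed Lipschitz) maps, hence continuous. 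Condition 2 requires $\hat\nu_s(\R^n)=1$ for a.e.\ $s$; since scaling by a nonzero factor is a bijection of $\R^n$, $\hat\nu_s(\R^n) = \nu_{\phi(s)}(\R^n) = 1$ whenever $\phi'(s)\neq 0$ and $\nu_{\phi(s)}(\R^n)=1$, and both hold for a.e.\ $s$ (the first because $\phi$ monotone Lipschitz has $\phi'\neq 0$ a.e.\ off the preimage of a null set, the second by assumption together with the fact that $\phi$ maps null sets to null sets is not needed here --- we only need it a.e.\ in $\hat s$, which follows since $\{\hat s: \nu_{\phi(\hat s)}(\R^n)\neq 1\}$ is contained in $\phi^{-1}$ of a null set, and such a set is null because $\phi$ is Lipschitz and monotone so $\phi^{-1}$ of a null set is null when $\phi' > 0$; on the complementary set $\phi'=0$ we can redefine $\hat\nu_s$ freely as any probability measure).

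The heart of the argument is condition 3: for all $\hat s_0,\hat s_1\in[0,\hat S]$,
\[
\int_{\hat s_0}^{\hat s_1}\!\int_{\R^n} v\, d\hat\nu_s(v)\, ds = \hat\gamma(\hat s_1) - \hat\gamma(\hat s_0).
\]
The computation I would carry out: by the definition of $\hat\nu_s$ as the pushforward under multiplication by $\phi'(s)$, the inner integral equals $\phi'(s)\int_{\R^n} w\, d\nu_{\phi(s)}(w)$. Then
\[
\int_{\hat s_0}^{\hat s_1} \phi'(s)\Big(\int_{\R^n} w\, d\nu_{\phi(s)}(w)\Big) ds = \int_{\hat s_0}^{\hat s_1} \phi'(s)\, F(\phi(s))\, ds,
\]
where $F(\sigma) := \int_{\R^n} w\, d\nu_\sigma(w)$. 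By condition 3 for the original curve, $F$ is the a.e.\ derivative of $\gamma$ in the sense that $\int_{\sigma_0}^{\sigma_1} F = \gamma(\sigma_1)-\gamma(\sigma_0)$; in particular $\gamma$ is absolutely continuous (it is Lipschitz once $\nu_s$ is bounded, but in general condition 3 already gives that $\gamma$ equals an indefinite integral). Then the change-of-variables / chain-rule identity $\int_{\hat s_0}^{\hat s_1}\phi'(s) F(\phi(s))\, ds = \int_{\phi(\hat s_0)}^{\phi(\hat s_1)} F(\sigma)\, d\sigma$ yields $\gamma(\phi(\hat s_1)) - \gamma(\phi(\hat s_0)) = \hat\gamma(\hat s_1) - \hat\gamma(\hat s_0)$, as required.

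The main obstacle --- or rather the only point needing care --- is justifying that change of variables when $\phi$ is merely monotone and Lipschitz, not a diffeomorphism: $\phi'$ may vanish on a set of positive measure and $\phi$ need not be injective. The clean way is to note that $\gamma$, being given by condition 3 as an indefinite integral of the $L^1$ (here $L^\infty$) function $F$, is absolutely continuous, so the composition $\gamma\circ\phi$ with a Lipschitz $\phi$ is absolutely continuous and the chain rule $(\gamma\circ\phi)'(s) = \gamma'(\phi(s))\phi'(s) = F(\phi(s))\phi'(s)$ holds for a.e.\ $s$ --- on the set where $\phi'(s)=0$ both sides vanish (using that $\gamma'$ exists a.e.\ and $\gamma\circ\phi$ is differentiable a.e.\ with derivative $0$ wherever $\phi'=0$, since $\gamma$ is Lipschitz), and elsewhere it is the ordinary chain rule. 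Integrating this identity over $[\hat s_0,\hat s_1]$ gives exactly the displayed equation. Finally I would remark that on the (possibly positive-measure) set where $\phi'(s)=0$, the formula defining $\hat\nu_s$ via $\phi'(s)^{-1}A$ is literally undefined, so one should understand the definition as: $\hat\nu_s$ is the pushforward of $\nu_{\phi(s)}$ under $v\mapsto\phi'(s)v$, which for $\phi'(s)=0$ is the Dirac mass $\delta_0$ --- consistent with $\hat\gamma$ being constant there --- and this choice makes conditions 2 and 3 hold a.e.\ without issue.
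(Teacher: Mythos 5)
Your proof is correct and follows the same route as the paper: verify the three conditions, with condition 3 reduced to the change of variables $\int_{\hat s_0}^{\hat s_1}\phi'(s)F(\phi(s))\,ds=\int_{\phi(\hat s_0)}^{\phi(\hat s_1)}F(\sigma)\,d\sigma$ after identifying $\hat\nu_s$ as the pushforward of $\nu_{\phi(s)}$ under scaling by $\phi'(s)$. The only difference is that you supply the justifications the paper elides --- the a.e.\ chain rule for the composition of an absolutely continuous $\gamma$ with a monotone Lipschitz $\phi$, and the convention $\hat\nu_s=\delta_0$ on the set where $\phi'(s)=0$ --- which are genuine (if routine) gaps in the paper's one-line ``just a change of variables.''
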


\begin{proof}
 The new curve $\hat{\gamma}$ is a well defined continuous curve, so condition 1 is fulfilled. As $\phi$ is Lipschitz, it is almost everywhere differentiable. Thus $\hat{\nu}_s$ is well defined for almost all $s\in[0,\hat{S}]$ and for almost all $s\in[0,\hat{S}]$ we have 
 \[\hat{\nu}_s(\R^n) = \nu_{\phi(s)}\left(\phi'(s)^{-1} \R^n\right) = \nu_{\phi(s)}(\R^n) = 1\]
 showing condition 2. Finally condition 3 is just a change of variables:
 \begin{align*}
  \int_{s_0}^{s_1} \int_{\R^n} v d\hat{\nu}_s(v) ds &= \int_{s_0}^{s_1} \int_{\R^n} \phi'(s) v d\nu_{\phi(s)}(v) ds \\
  = \int_{\phi(s_0)}^{\phi(s_1)} \int_{\R^n} v d\nu_{t}(v) dt &= \gamma(\phi(s_1))-\gamma(\phi(s_0))\qedhere
 \end{align*}

\end{proof}

\begin{remark}[On arc-length parametrization] \label{rem:YoungArclength}
 One might consider a generalized curve $(\gamma,\nu_s)$ as parameterized by arc-length if $\supp\nu_s \subset \S^{N-1}$.
 It is always possible to reparameterize a generalized curve in such a way, using the ususal arguments, however we will not do so. In Young's original definition generalized curves are always parametrized this way..
 
 Arc-length parametrization of $(\gamma,\nu_s)$ does in general not correspond to arc-length parametrization of $\gamma$. In fact if $(\gamma,\nu_s)$ is parametrized by arc-length and $\gamma$ is differentiable, we will have for almost all $s\in [0,S]$
 \[\abs{\dot{\gamma}(s)} = \abs{\int_{\S^{N-1}} v d\nu_s(v) } \leq \int_{\S^{N-1}} \abs{v} d\nu_s(v) = 1 \]
 with equality if and only if $\nu_s$ is concentrated at a single point, or in other words $\gamma$ is only parameterized by arc-length if $(\gamma,\nu)$ is the associated generalized curve of $\gamma$.
 
 The usefulness of this definition is that the limit of any sequence of arc-length parameterized curves will again be parameterized by arc-length. It also forms a bridge to the original generalized curves of Young and the definition of a Varifold, of which our generalized curves can be considered a special oriented case.
\end{remark}

\begin{lemma}[Convergence of generalized curves]\label{lem:convGenCurves}
 Let $\left((\gamma_k,(\nu_k)_s)\right)_{k\in\N}$ be a sequence of generalized curves such that $\supp (\nu_k)_s$ is uniformly bounded, $\gamma_k \to \gamma$ pointwise and $(\nu_k) \rightharpoonup \nu_s$ in the sense of measures. Then $(\gamma,\nu_s)$ is also a generalized curve.
\end{lemma}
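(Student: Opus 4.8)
The plan is to check the three defining conditions of a generalized curve for the pair $(\gamma,\nu_s)$ one at a time, using the uniform support bound $\supp(\nu_k)_s\subset B_R(0)$ (valid for all $k$ and a.a.\ $s$) as the single tool that makes everything work.

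First, for condition~1: by \autoref{lem:genCurveLipsch} each $\gamma_k$ is Lipschitz with the common constant $L\le R$, so the pointwise estimate $\abs{\gamma(s_1)-\gamma(s_0)}=\lim_k\abs{\gamma_k(s_1)-\gamma_k(s_0)}\le R\abs{s_1-s_0}$ shows $\gamma$ is Lipschitz, hence continuous (and in fact the convergence upgrades to locally uniform by Arzelà--Ascoli, though this is not needed). Next I would pin down the support and mass of the limit family. Fix a continuous cutoff $\chi:\R^n\to[0,1]$ with compact support and $\chi\equiv 1$ on $\overline{B_R(0)}$. For a.a.\ $s$ we have $(\nu_k)_s\rightharpoonup\nu_s$; testing against any continuous $\phi$ supported in $\R^n\setminus\overline{B_R(0)}$ gives $\int\phi\,d\nu_s=\lim_k\int\phi\,d(\nu_k)_s=0$, so $\supp\nu_s\subset\overline{B_R(0)}$. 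Testing against $\chi$ then yields $\nu_s(\R^n)=\int\chi\,d\nu_s=\lim_k\int\chi\,d(\nu_k)_s=\lim_k(\nu_k)_s(\R^n)=1$ for a.a.\ $s$, which is condition~2.

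For condition~3, fix $s_0,s_1\in[0,S]$. Condition~3 for $(\gamma_k,(\nu_k)_s)$ states $\int_{s_0}^{s_1}\int_{\R^n}v\,d(\nu_k)_s(v)\,ds=\gamma_k(s_1)-\gamma_k(s_0)$, whose right side converges to $\gamma(s_1)-\gamma(s_0)$, so it suffices to pass to the limit on the left. Since every measure in sight is supported in $\overline{B_R(0)}$, where $\chi\equiv 1$, I may replace the unbounded integrand $v$ by the bounded continuous function $v\mapsto v\chi(v)$ without altering any of the integrals. For a.a.\ $s$ weak convergence gives $\int v\chi(v)\,d(\nu_k)_s\to\int v\chi(v)\,d\nu_s$; these inner integrals are bounded by $R$ uniformly in $k$ and $s$ (mass one, support in $B_R(0)$), and $s\mapsto\int v\chi(v)\,d\nu_s$ is measurable as a pointwise limit of measurable maps. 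Dominated convergence on $[s_0,s_1]$ then gives $\int_{s_0}^{s_1}\int v\,d(\nu_k)_s\,ds\to\int_{s_0}^{s_1}\int v\,d\nu_s\,ds$, and equating the two limits establishes condition~3.

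The only genuinely delicate point is condition~3: the barycenter $\int v\,d\nu_s$ pairs the measures against the \emph{unbounded} identity map, so weak-$\star$ convergence of measures does not apply to it directly. Resolving this is exactly what the uniform support hypothesis is for — it lets us truncate the integrand to a compactly supported continuous function and lets us bound the inner integrals uniformly — after which interchanging the limit with the $s$-integration is a routine dominated-convergence argument (essentially the same computation as in \autoref{lem:YoungConvInt}). Everything else is soft.
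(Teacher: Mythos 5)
Your proof is correct and follows essentially the same route as the paper: Lipschitz bounds from \autoref{lem:genCurveLipsch} for condition 1, weak convergence for condition 2, and truncation of the unbounded integrand $v$ to a compactly supported continuous function for condition 3. You are in fact slightly more careful than the paper in two places the paper glosses over — justifying mass preservation under weak convergence via a cutoff equal to $1$ on $\overline{B_R(0)}$, and justifying the interchange of the $k$-limit with the $s$-integration by dominated convergence — which is a welcome tightening rather than a different approach.
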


\begin{proof}
 Let us verify the three conditions. Condition 2 follows directly from the definition of convergence of measures.
 
 Since $\supp (\nu_k)_s \subset B_R(\R^n)$ for some $R > 0$, \autoref{lem:genCurveLipsch} tells us that $\gamma_k$ is uniformly Lipschitz continuous. But then so is the limit $\gamma$, which implies condition 1.

 Finally looking at condition 3, we consider
 \[\int_{s_0}^{s_1} \int_{\R^n} v d(\nu_k)_s(v)ds = \gamma_k(s_1) - \gamma_k(s_0).\]
 As $\gamma_k \to \gamma$ pointwise, the right hand side converges to $\gamma(s_1)-\gamma(s_0)$. For the left hand side, we again use the bounded support and replace the integrand $v$ with a compactly supported function $f \in C_c^0(\R \times \R^n)$ such that $f(v) = v$ in $[0,S] \times B_R(\R^n)$. Then we have by convergence of the measures
 \begin{align*}\int_{s_0}^{s_1} \int_{\R^n}  v d(\nu_k)_s(v)ds&= \int_{s_0}^{s_1} \int_{\R^n}  f(v) d(\nu_k)_s(v)ds\\
  \to \int_{s_0}^{s_1} \int_{\R^n}  f(v) d\nu_s(v)ds &= \int_{s_0}^{s_1} \int_{\R^n}  v d\nu_s(v) ds
 \end{align*}
 and thus condition 3.
\end{proof}

\subsection{Existence in the fully relaxed problem}

We can now combine the relaxation of Sections \ref{sec:verticalODE} and \ref{sec:Energy} with the Young-measures of \autoref{subsec:genCurves} into a single approach which will cover the general case of $L^1$ optimal control problems. 

\begin{definition}[Fully relaxed problem]\label{def:fullyRelaxed}
Let $A, f$ and $g$ as before and define $\tilde{A}$ and $\tilde{f}$ according to Definitions \ref{def:relaxedODE} and \ref{def:relaxedEnergy}. Then the fully relaxed problem is given by
 \begin{align*}
 \text{Minimize } & \tilde{\mathcal{F}}(\tilde{t},\tilde{y},\mu_s) := \int_0^T \int_{\R^k} \tilde{f}(\tilde{t}(s),\tilde{y}(s),\tilde{v},\tilde{u}) \,d\mu_s(\tilde{v},\tilde{u}) ds + g(\tilde{y}(s))\\
 \text{where } & \tilde{y}'(s) = \int_{\R^{k+1}} \tilde{A}(\tilde{t}(s),\tilde{y}(s),\tilde{v},\tilde{u}) \,d\mu_s(\tilde{v},\tilde{u}) \quad \forall s\in [0,S]\\
 &\tilde{t}'(s) = \int_{\R^{k+1}} \tilde{v} \, d\mu_s(\tilde{v},\tilde{u})  \quad \forall s\in[0,S]\\
 &\tilde{y}(0) = y_0, \tilde{t}(0) = 0, \tilde{t}(S) = T
\end{align*}
where $\mu_s$ is a $L^\infty$-Young measure on the bounded set $[0,1]\times B_1(0) \subset \R^{k+1}$.
\end{definition}

\begin{remark}[$(\tilde{t},\tilde{y})$ as a generalized curve]
 The above definition of the fully relaxed problem is written in the way commonly used. There is however a different, equivalent way to look at it. If we fix $s \in [0,S]$, then we can also push forward $\mu_s$ using $\tilde{A}$ to get a measure $\nu_s= (\pi_{\tilde{v}},\tilde{A}(\tilde{t}(s),\tilde{y}(s),.,.))_* \mu_s$ or to be precise:
 \[\nu_s : U \mapsto \mu_s\left(\left\{(\tilde{v},\tilde{u}) \in \R^{k+1}: (\tilde{v},\tilde{A}(\tilde{t}(s),\tilde{y}(s),\tilde{v},\tilde{u})) \in U \right\}\right) \]
 
 Using this together with the definition of $\tilde{y}$, it follows that $((\tilde{t},\tilde{y}),\nu_s)$ is a generalized curve. Furthermore if $\mu_s$ is generated by $(\tilde{v}_k,\tilde{u}_k)_k$ and $\tilde{A}$, then by continuity of the pushforward, $((\tilde{t},\tilde{y}),\nu_s)$ is generated by the sequence of $(\tilde{t}_k,\tilde{y}_k)$ which solve the space-time ODE for $(\tilde{v}_k,\tilde{u}_k)$. This gives us an alternative formulation of the fully relaxed problem
 
 \begin{align*}
 \text{Minimize } & \tilde{\mathcal{F}}(\tilde{t},\tilde{y},\mu_s) := \int_0^T \int_{\R^k} \tilde{f}(\tilde{t}(s),\tilde{y}(s),\tilde{v},\tilde{u}) \,d\mu_s(\tilde{v},\tilde{u}) ds + g(\tilde{y}(s))\\
 \text{where } & \nu_s = (\pi_{\tilde{v}}, \tilde{A}(\tilde{t}(s),\tilde{y}(s),.,.) ) \quad \forall s\in [0,S]\\
 &\tilde{y}(0) = y_0, \tilde{t}(0) = 0, \tilde{t}(S) = T
\end{align*}
where $\mu_s$ is a $L^\infty$-Young measure on the bounded set $[0,1]\times B_1(0) \subset \R^{k+1}$ and $((\tilde{t},\tilde{y}),\nu_s)$ is a generalized curve uniquely defined through $\mu_s$.
\end{remark}

\begin{proposition}[Existence of a recovery sequence]
 Any solution $(\tilde{t},\tilde{y},\mu_s)$ to the fully relaxed problem is a limit of solutions $(\tilde{t}_k,\tilde{y}_k,(\tilde{v}_k,\tilde{u}_k))$ to the space-time relaxed problem in the sense that $\mu_s$ is generated by $(\tilde{v}_k,\tilde{u}_k)_k$ and:
 \begin{align*}
  \tilde{t}_k & \to \tilde{t} \text{ uniformly}\\
  \tilde{y}_k & \to \tilde{y} \text{ uniformly} \\
  \tilde{\mathcal{F}}(\tilde{t},\tilde{y},\mu_s) &= \lim_{k\to\infty} \tilde{\mathcal{F}}(\tilde{t}_k,\tilde{y}_k,\tilde{v}_k,\tilde{u}_k)
 \end{align*}
\end{proposition}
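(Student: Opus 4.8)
The plan is to construct the recovery sequence from a generating sequence of $\mu_s$, let $\tilde{t}_k,\tilde{y}_k$ be the space-time ODE solutions driven by it, and then pass to the limit in the state equations and in the energy using only \autoref{lem:YoungConvInt} together with an Arzel\`a--Ascoli plus uniqueness argument. First I would pick, by definition of an $L^\infty$-Young measure, an equibounded sequence $(\tilde{v}_k,\tilde{u}_k)_k$ with $\delta_{(\tilde{v}_k(s),\tilde{u}_k(s))}\rightharpoonup\mu_s$ for almost all $s\in[0,S]$, and compose it with the nearest-point projection onto the compact convex set $K:=[0,1]\times\overline{B_1(0)}$ on which $\mu_s$ is supported, so that the sequence takes values in $K$ without changing the generated Young measure. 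Applying \autoref{lem:YoungConvInt} to the continuous function $(\tilde{v},\tilde{u})\mapsto\tilde{v}$ gives $\norm[1]{\tilde{v}_k}\to\int_0^S\int_K\tilde{v}\,d\mu_\sigma\,d\sigma=\tilde{t}(S)=T$; after rescaling $\tilde{v}_k$ by the factor $T/\norm[1]{\tilde{v}_k}$ when this is $\leq 1$, or otherwise extending the $s$-interval by the vanishing length $T-\norm[1]{\tilde{v}_k}$ with controls $(1,0)$ there and reparametrising back to $[0,S]$, I may assume $\norm[1]{\tilde{v}_k}=T$ while still generating $\mu_s$. Then I define $\tilde{t}_k,\tilde{y}_k$ as the unique solutions of the space-time relaxed ODE of \autoref{def:relaxedODE} for the controls $(\tilde{v}_k,\tilde{u}_k)$, which exist by \autoref{prop:exODE}, so that $(\tilde{t}_k,\tilde{y}_k,\tilde{v}_k,\tilde{u}_k)$ is admissible for the space-time relaxed problem and, by construction, $\mu_s$ is generated by $(\tilde{v}_k,\tilde{u}_k)_k$.

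The second step is to record a Carath\'eodory-type strengthening of \autoref{lem:YoungConvInt}: for any continuous $h:[0,S]\times K\to\R$ one has
\[\int_0^s h\bigl(\sigma,\tilde{v}_k(\sigma),\tilde{u}_k(\sigma)\bigr)\,d\sigma\;\longrightarrow\;\int_0^s\int_K h(\sigma,\tilde{v},\tilde{u})\,d\mu_\sigma(\tilde{v},\tilde{u})\,d\sigma\]
uniformly in $s\in[0,S]$. This follows from \autoref{lem:YoungConvInt} by approximating $h$ uniformly by maps that are piecewise constant in $\sigma$ (the image of $\sigma\mapsto h(\sigma,\cdot)$ in $C(K)$ is compact) and applying the lemma on each subinterval. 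Taking $h(\sigma,\tilde{v},\tilde{u})=\tilde{v}$ yields $\tilde{t}_k(s)=\int_0^s\tilde{v}_k\to\tilde{t}(s)$ pointwise, and since $|\tilde{t}_k'|=|\tilde{v}_k|\leq 1$ the $\tilde{t}_k$ are equi-Lipschitz, so $\tilde{t}_k\to\tilde{t}$ uniformly.

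For the state I would note that $|\tilde{y}_k'|=|\tilde{A}(\tilde{t}_k,\tilde{y}_k,\tilde{v}_k,\tilde{u}_k)|\leq 2C$ on $K$, just as in \autoref{lem:uniformBounds}, so the $\tilde{y}_k$ are equi-Lipschitz and equibounded; by Arzel\`a--Ascoli it is enough to show that every uniform limit $\tilde{z}$ of a subsequence $\tilde{y}_{k_j}$ equals $\tilde{y}$. Using uniform continuity of $\tilde{A}$ on $[0,T]\times\{|\tilde{y}|\leq R\}\times K$ together with $\tilde{t}_{k_j}\to\tilde{t}$ and $\tilde{y}_{k_j}\to\tilde{z}$ uniformly, one replaces $\tilde{A}(\tilde{t}_{k_j},\tilde{y}_{k_j},\cdot,\cdot)$ by $\tilde{A}(\tilde{t},\tilde{z},\cdot,\cdot)$ up to a uniform error tending to zero and then applies the Carath\'eodory step with $h(\sigma,\tilde{v},\tilde{u})=\tilde{A}(\tilde{t}(\sigma),\tilde{z}(\sigma),\tilde{v},\tilde{u})$ to obtain
\[\tilde{z}(s)=y_0+\int_0^s\int_K\tilde{A}(\tilde{t}(\sigma),\tilde{z}(\sigma),\tilde{v},\tilde{u})\,d\mu_\sigma\,d\sigma,\]
i.e. $\tilde{z}$ solves the fully relaxed state equation of \autoref{def:fullyRelaxed}, which by the Lipschitz dependence of $\tilde{A}$ on the state has $\tilde{y}$ as its unique solution; hence $\tilde{z}=\tilde{y}$ and the whole sequence $\tilde{y}_k\to\tilde{y}$ uniformly. (One could instead phrase this through the generalized-curve reformulation following \autoref{def:fullyRelaxed}, but the dependence of $\tilde{A}$ on the varying states $(\tilde{t}_k,\tilde{y}_k)$ forces essentially the same compactness argument.) For the energy I then write $\tilde{\mathcal{F}}(\tilde{t}_k,\tilde{y}_k,\tilde{v}_k,\tilde{u}_k)=\int_0^S\tilde{f}(\tilde{t}_k,\tilde{y}_k,\tilde{v}_k,\tilde{u}_k)\,d\sigma+g(\tilde{y}_k(S))$: the boundary term converges to $g(\tilde{y}(S))$ by continuity of $g$ and uniform convergence, while for the integral I replace $\tilde{f}(\tilde{t}_k,\tilde{y}_k,\cdot,\cdot)$ by $\tilde{f}(\tilde{t},\tilde{y},\cdot,\cdot)$ using the equicontinuity of $f$ in $(t,y)$ carried over to $\tilde{f}$, and apply the Carath\'eodory step with $h=\tilde{f}(\tilde{t}(\cdot),\tilde{y}(\cdot),\cdot,\cdot)$, giving convergence of the integral to $\int_0^S\int_K\tilde{f}(\tilde{t},\tilde{y},\tilde{v},\tilde{u})\,d\mu_\sigma\,d\sigma$; adding the two contributions yields $\tilde{\mathcal{F}}(\tilde{t}_k,\tilde{y}_k,\tilde{v}_k,\tilde{u}_k)\to\tilde{\mathcal{F}}(\tilde{t},\tilde{y},\mu_s)$.

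The hard part is the passage to the limit in the nonlinear state equation: since \autoref{prop:exODE} only gives continuous dependence of ODE solutions on the controls when $A$ is linear, one cannot argue by direct continuity and must lean on compactness plus uniqueness of the limiting relaxed ODE. This presupposes that $\tilde{A}$ (and likewise $\tilde{f}$) extend to genuinely continuous, bounded functions on $[0,T]\times\R^n\times K$ including the face $\tilde{v}=0$, which is precisely where the asymptotic-linearity hypothesis (condition 1 of \autoref{def:assumptions}) and the equicontinuity hypotheses must be used, and making that extension uniform is the delicate point. A secondary, purely bookkeeping issue is keeping the modified controls admissible, i.e. $\norm[1]{\tilde{v}_k}=T$, without disturbing the Young measure they generate.
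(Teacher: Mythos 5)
Your proposal is correct and follows essentially the same route as the paper's (very terse) proof: take a generating sequence for $\mu_s$, solve the space-time ODEs, use the uniform Lipschitz bounds and Arzel\`a--Ascoli, and pass to the limit in the energy via \autoref{lem:YoungConvInt}. You additionally supply details the paper leaves implicit --- normalizing the controls so that $\norm[1]{\tilde{v}_k}=T$ holds, upgrading \autoref{lem:YoungConvInt} to $s$-dependent integrands, and, most importantly, identifying the Arzel\`a--Ascoli limit with the \emph{given} $(\tilde{t},\tilde{y})$ through uniqueness of the averaged state equation rather than merely extracting a convergent subsequence --- all of which are legitimate and genuinely needed for the statement as written.
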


\begin{proof}
 Pick a generating sequence $(\tilde{v}_k,\tilde{u}_k)_k$ for the Young measure $\mu_s$ and let $\tilde{t}_k,\tilde{y}_k$ be the solutions to the corresponding space-time ODE. As the $(\tilde{v}_k,\tilde{u}_k)_k$ are uniformly bounded, \autoref{def:assumptions} and $\tilde{t}_k' = \tilde{v}_k$ give us uniform Lipschitz bounds on $\tilde{t}_k$ and $\tilde{y}_k$. Using those we can use the Arzela-Ascoli theorem to pick a subsequence (not relabeled) for which they converge uniformly.
 
 Finally $\tilde{f}(\tilde{t}_k,\tilde{y}_k,.,.)$ converges uniformly to $\tilde{f}(\tilde{t},\tilde{y},.,.)$ so convergence of the energy follows from \autoref{lem:YoungConvInt} and continuity of $g$.
\end{proof}

\begin{theorem}[Existence of fully relaxed solutions]
 Let $A$ satisfy conditions 1-3 from \autoref{def:assumptions}.
 Then the fully relaxed problem from \autoref{def:fullyRelaxed} has a minimizer.
\end{theorem}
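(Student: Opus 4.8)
The plan is to run the direct method on the fully relaxed problem, mirroring the proof of \autoref{thm:existConv} but replacing the pair $(\tilde v,\tilde u)$ by the Young measure $\mu_s$. First I would take a minimizing sequence $(\tilde t_k,\tilde y_k,(\mu_k)_s)$; since the problem is reparametrization-invariant and every $(\mu_k)_s$ is supported in the fixed compact set $[0,1]\times B_1(0)$, the generating controls $(\tilde v_k,\tilde u_k)$ are automatically equibounded in $L^\infty$, so I can pass to normalized solutions and invoke \autoref{lem:uniformBounds} to bound $S_k$, $\norm[\infty]{\tilde t_k}$, $\norm[\infty]{\tilde y_k}$, $\norm[\infty]{\tilde t_k'}$ and $\norm[\infty]{\tilde y_k'}$ uniformly in terms of $E_0:=\inf\tilde{\mathcal F}+1$. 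As in \autoref{thm:existConv}, using $1$-homogeneity of $\tilde A$ and $\tilde f$ (so that the zero control contributes nothing) I extend every solution by $\mu_k\equiv\delta_{(0,0)}$ past $S_k$ to a common interval $[0,S]$ with $S=\sup_k S_k$, without changing energies or norms.

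Next I would extract limits: by Arzel\`a--Ascoli $\tilde t_k\to\tilde t$ and $\tilde y_k\to\tilde y$ uniformly (subsequence, not relabeled); since the $(\mu_k)_s$ live on a fixed compact set, the fundamental theorem of Young measures gives a further subsequence with $(\mu_k)_s\rightharpoonup\mu_s$ as an $L^\infty$-Young measure, and $\mu_s$ is again supported in $[0,1]\times B_1(0)$. It remains to verify that the triple $(\tilde t,\tilde y,\mu_s)$ is admissible and lower-semicontinuous-optimal. For the ODE constraint I write $\tilde y_k'(s)=\int \tilde A(\tilde t_k(s),\tilde y_k(s),\tilde v,\tilde u)\,d(\mu_k)_s$ and split, exactly as in \autoref{prop:exODE}, into $\int[\tilde A(\tilde t_k,\tilde y_k,\cdot,\cdot)-\tilde A(\tilde t,\tilde y,\cdot,\cdot)]\,d(\mu_k)_s$, which vanishes by the Lipschitz bound on $\tilde A$ and the uniform convergence of $(\tilde t_k,\tilde y_k)$, plus $\int \tilde A(\tilde t,\tilde y,\cdot,\cdot)\,d(\mu_k)_s$, which converges to $\int \tilde A(\tilde t,\tilde y,\cdot,\cdot)\,d\mu_s$ after testing against $L^1$ functions and applying \autoref{lem:YoungConvInt} (the relevant integrand $\tilde A(\tilde t(s),\tilde y(s),\cdot,\cdot)$ is continuous and bounded on the compact support). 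The same argument with $\tilde v$ in place of $\tilde A$ handles $\tilde t_k'\to\tilde t'$ in the weak sense; together with the uniform convergence this yields $\tilde y'=\int\tilde A\,d\mu_s$, $\tilde t'=\int\tilde v\,d\mu_s$, $\tilde y(0)=y_0$, $\tilde t(0)=0$, $\tilde t(S)=T$.

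For optimality of the energy, the key point is that $\tilde f$ may take the value $+\infty$ and is only equicontinuous in $(\tilde t,\tilde y)$, so I would first observe that on the compact support $[0,1]\times B_1(0)$ the finiteness and coercivity from \autoref{def:assumptions}(3) keep $\tilde f(\tilde t(s),\tilde y(s),\cdot,\cdot)$ bounded and continuous there (recall $\tilde f(\tilde t,\tilde y,\tilde v,\tilde u)=\tilde v f(\tilde t,\tilde y,\tilde u/\tilde v)$ with $|\tilde u|\le 1$, $\tilde v\le 1$, and the $\tilde v=0$ extension is the asymptotic integrand, which is finite). Writing the running cost of $(\mu_k)_s$ as $\int\tilde f(\tilde t_k,\tilde y_k,\cdot,\cdot)\,d(\mu_k)_s$ and using that $\tilde f(\tilde t_k,\tilde y_k,\cdot,\cdot)\to\tilde f(\tilde t,\tilde y,\cdot,\cdot)$ uniformly on the compact support (by equicontinuity of $f$ in $(t,y)$ and uniform convergence of $(\tilde t_k,\tilde y_k)$), \autoref{lem:YoungConvInt} gives convergence -- in fact equality, not just lower semicontinuity -- of the running cost to $\int_0^S\int\tilde f(\tilde t,\tilde y,\cdot,\cdot)\,d\mu_s\,ds$; continuity of $g$ and $\tilde t_k(S)\to\tilde t(S)=T$ (equivalently $\tilde y_k(S)\to\tilde y(S)$) handle the boundary term. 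Hence $\tilde{\mathcal F}(\tilde t,\tilde y,\mu_s)=\lim_k\tilde{\mathcal F}(\tilde t_k,\tilde y_k,(\mu_k)_s)=\inf\tilde{\mathcal F}$, so $(\tilde t,\tilde y,\mu_s)$ is a minimizer. The main obstacle I anticipate is purely bookkeeping: making sure that the possibly-$+\infty$-valued and only-equicontinuous $\tilde f$ really restricts to a uniformly continuous, finite family of integrands on the compact support set so that \autoref{lem:YoungConvInt} applies verbatim; once that is pinned down, everything else is a direct transcription of \autoref{thm:existConv} and \autoref{prop:exODE}.
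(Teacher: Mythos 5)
Your route is genuinely different from the paper's: you run the direct method on the fully relaxed problem itself, taking a minimizing sequence of Young-measure triples $(\tilde t_k,\tilde y_k,(\mu_k)_s)$. The paper instead takes a minimizing sequence of the \emph{space-time relaxed} problem (classical controls $(\tilde v_k,\tilde u_k)$), lets the Young measure $\mu_s$ appear only in the limit via $\delta_{(\tilde v_k,\tilde u_k)(s)}\rightharpoonup\mu_s$, and identifies $\inf$ over the fully relaxed problem with $\inf$ over the space-time relaxed one through the recovery-sequence proposition. The payoff of the paper's choice is that every compactness and normalization tool it has proved -- the fundamental theorem of Young measures, \autoref{lem:uniqNormalization}, \autoref{lem:uniformBounds}, \autoref{lem:convGenCurves} -- applies verbatim, because the objects being compactified are always $L^\infty$ \emph{functions}. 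Your limit-passage arguments (splitting the ODE residual as in \autoref{prop:exODE}, uniform convergence of $\tilde f(\tilde t_k,\tilde y_k,\cdot,\cdot)$ on the compact support plus \autoref{lem:YoungConvInt} for the energy) are fine and essentially the same as the paper's.

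The gaps are in the compactness step, and they are concrete. First, you invoke ``the fundamental theorem of Young measures'' to extract $(\mu_k)_s\rightharpoonup\mu_s$, but that theorem compactifies sequences of \emph{functions}; here you have a sequence of \emph{Young measures}. You need weak-$\star$ compactness in the space of parametrized measures on $[0,1]\times B_1(0)$ together with an argument (e.g.\ a diagonal extraction from generating sequences of each $(\mu_k)_s$) that the limit is again an $L^\infty$-Young measure in the paper's function-generated sense; neither is supplied by anything in the paper. Second, \autoref{lem:uniformBounds} is stated for \emph{normalized classical} solutions, and its bound on $S_k$ rests on the pointwise identity $\max(\tilde v,|\tilde u|)=1$ a.e.\ and the decomposition $S=|\{\tilde v<1\}|+|\{\tilde v=1\}|$. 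For a Young-measure solution there is no such pointwise normalization, and the paper's normalization lemma is not proved for measures; saying you ``pass to normalized solutions'' hides a real construction. Both gaps are fixable, but the cleanest fix is to replace each $(\mu_k)_s$ by a classical generating sequence with almost the same energy and diagonalize -- which collapses your argument back into the paper's: minimize over classical space-time controls and let the Young measure emerge only in the limit.
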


\begin{proof}
 Take a minimizing sequence $(\tilde{t}_k,\tilde{y}_k,\tilde{v}_k,\tilde{u}_k)$ to the space-time relaxed problem. As before, we can assume $(\tilde{t}_k,\tilde{y}_k)$ to be normalized. Then by the same arguments as in the convex case in \autoref{thm:existConv}, we use \autoref{lem:uniformBounds} to obtain uniform bounds and extend all functions to a common interval $[0,S]$. Now since $\tilde{t}, \tilde{y}$ are bounded in $W^{1,\infty}$ and $\tilde{v},\tilde{u}$ are naturally bounded in $L^\infty$, we get a converging subsequence (not relabeled) and limits $S$, $(\tilde{t},\tilde{y}) ,\nu_s$ and $\mu_s$ such that
 \begin{align*}
  S_k & \to S \\
  (\tilde{t}_k,\tilde{y}_k) &\to (\tilde{t},\tilde{y}) \text{ uniformly}\\
  \delta_{(\tilde{t}_k(s),\tilde{y}_k(s))} &\rightharpoonup \nu_s \text{ in the sense of measures}\\
  \delta_{(v_k,\tilde{u}_k)(s)}& \rightharpoonup \mu_s \text{ as a Young measure.}
 \end{align*}
 Note that by \autoref{lem:convGenCurves}, the limit $((\tilde{t},\tilde{y}) ,\nu_s)$ is also a generalized curve. Furthermore since $\tilde{A}$ is uniformly continuous in $\tilde{t}$ and $\tilde{y}$, the limit still solves the ODE
 \[ \nu_s= (\pi_{\tilde{v}},\tilde{A}(\tilde{t}(s),\tilde{y}(s),.,.))_* \mu_s.\]
 So in total $(t,\tilde{y}, \mu_s)$ is a an admissible solution to the relaxed problem and by the same arguments as in the last proposition we have
 \[\tilde{\mathcal{F}}(t,\tilde{y}, \mu_s) = \liminf_{k\to \infty} \tilde{\mathcal{F}}(t_k,\tilde{y}_k, \delta_{(v_k,\tilde{u}_k)(.)})\]
 and thus it is also a minimizer.
\end{proof}

\begin{remark}[On generalisations and classical solutions]
 All our previous considerations from \autoref{rem:openEndTime} and \autoref{rem:uniqueness} apply here as well.
 
 Furthermore, as expected a minimizer of the fully relaxed problem defaults back onto a minimizer of the space-time relaxed problem if the Young measure corresponds to a function, i.e. $\mu_s = \delta_{\tilde{v}(s),\tilde{u}(s)}$ for some $\tilde{v},\tilde{u}$. 
 
 The other direction, removing the space-time relaxation is less obvious. If we want a Young measure solution to the original problem, requiring that $\tilde{t}$ is strictly monotonous, is not enough. What we additionally need is that if $(\tilde{v}_k,\tilde{u}_k)_k$ generates $\mu_s$, then $\left(\frac{\tilde{u}_k}{\tilde{v}_k}\right)_k$ generates an $L^p$-Young measure for some $p\in[1,\infty]$, a condition that is much harder to enforce.
\end{remark}

\subsection{Relation to DiPerna-Majda measures}

As stated in the introduction, the original motivation was to provide a simpler alternative to the relaxation used in \cite{KruzikRoubicek99}. To close this section, let us now compare the two alternatives directly.

First let us compare space-time curves as a relaxation to the one used in \cite{KruzikRoubicek99}, where they used the space $W^1_\mu$ to generalize the curve $y$.

\begin{proposition}[Relation to $W^1_\mu$] \label{prop:W1mu}
 Consider the following space introduced by Souček  \cite{soucekSpacesFunctionsDomain1972}
 \begin{align*}
  W^1_\mu([0,T];\R^n) := & \left\{ (y,\dot{y}) \in L^1([0,T];\R^n) \times M([0,T];\R^n) ; \right. \\
  & \left. \exists (y_k)_{k\in\N} \subset W^{1,1}([0,T];\R^n): y_k \to y \text{ in } L^1, \dot{y}_k \rightharpoonup^* \dot{y} \text{ in }M \right\}
 \end{align*}
 Let $\tilde{t},\tilde{y}$ be a relaxed curve.\footnote{We ignore the right hand side for now and assume that $\tilde{t}'$ and $\tilde{y}'$ can be chosen arbitrary, i.e. assume that $\tilde{A}(\tilde{t},\tilde{y},.,.)$ is always invertible. If this is not the case, only a restricted set of relaxed curves, both in the space-time approach as well as in the $W^1_\mu$-sense, can actually be reached. Those restricted sets can still be related to each other by the same approach (compare the proof of \cite[Prop. 3.1, 3.2]{KruzikRoubicek99} with the proof of \autoref{prop:W1mu} here), but doing so is a bit more involved.} Then there exists a unique corresponding $(y,\dot{y}) \in W^1_\mu([0,T];\R^n)$ such that
 \begin{align*}
  y(t) &= \tilde{y}(\tilde{t}^{-1}(t)) \text{ whenever $\tilde{t}^{-1}$ is unique}\\
  \int_0^T \phi d\dot{y} &= \int_0^S \tilde{y}'(s) \phi(\tilde{t}(s)) ds.
 \end{align*}
 Conversely, if $(y,\dot{y}) \in W^1_\mu([0,T];\R^n)$, then there is at least one relaxed curve $\tilde{t},\tilde{y}$ corresponding to $(y,\dot{y})$ by the above process.
\end{proposition}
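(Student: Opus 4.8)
The plan is to establish the correspondence in both directions via the natural change of variables $t = \tilde t(s)$, handling the non-injectivity of $\tilde t$ carefully since $\tilde t$ is monotone but not strictly so (it is constant exactly on the vertical parts of the curve). For the forward direction, given a relaxed curve $(\tilde t, \tilde y)$ on $[0,S]$, I would first define $y$ on $[0,T]$ by the formula $y(t) = \tilde y(s)$ for any $s$ with $\tilde t(s) = t$. To see this is well-defined (a.e.\ in $t$), I would use the fact that wherever $\tilde t$ is constant on an interval $[s_1,s_2]$, we have $\tilde v = 0$ a.e.\ there, hence $\tilde t' = 0$ and by $1$-homogeneity of $\tilde A$ (\autoref{lem:A1homog}, applied as in the proof of \autoref{lem:uniqNormalization}) also $\tilde y' = 0$, so $\tilde y$ is constant on $[s_1,s_2]$ as well. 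Thus the value $\tilde y(s)$ depends only on $\tilde t(s)$, and $y$ is unambiguously defined; moreover the set of $t$ where $\tilde t^{-1}$ is genuinely multi-valued is the (at most countable) image of the maximal constancy intervals, which is Lebesgue-null, so the stated pointwise identity holds a.e.

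Next I would produce the generating sequence witnessing $(y,\dot y)\in W^1_\mu$. The clean way is to reparametrize: by \autoref{lem:uniqNormalization} we may assume the curve is normalized, and then I would approximate by replacing each maximal constancy interval of $\tilde t$ by a short interval on which $\tilde v$ is a small positive constant $\varepsilon_k \searrow 0$ (adjusting the total length of $\tilde t'$ back to $T$), giving curves $(\tilde t_k, \tilde y_k)$ with $\tilde t_k$ strictly increasing. Each such $\tilde t_k$ has a Lipschitz inverse, so $y_k := \tilde y_k \circ \tilde t_k^{-1} \in W^{1,1}$, and one checks $y_k \to y$ in $L^1$ (uniform convergence away from the jump set, bounded on it) while $\dot y_k = (\tilde y_k' / \tilde t_k')\circ \tilde t_k^{-1}$ satisfies, for any $\phi \in C^0$, $\int_0^T \phi \, \dot y_k \, dt = \int_0^{S_k} \tilde y_k'(s)\,\phi(\tilde t_k(s))\,ds \to \int_0^S \tilde y'(s)\,\phi(\tilde t(s))\,ds$ by dominated convergence (using the uniform $W^{1,\infty}$ bound $|\tilde y'|\le 2C$ from \autoref{lem:uniformBounds}-type estimates and $|\tilde t_k'|\le 1$). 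This simultaneously shows $\dot y_k \rightharpoonup^* \dot y$ in $M$, where $\dot y$ is \emph{defined} by the second displayed formula, and verifies that formula. Uniqueness of $(y,\dot y)$ is immediate since both $y$ (a.e.) and the action of $\dot y$ on $C^0$ are pinned down by the two displayed identities.

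For the converse, given $(y,\dot y)\in W^1_\mu$ with generating sequence $y_k \in W^{1,1}$, the idea is to build the space-time curve by the standard ``graph with vertical jumps inserted'' construction: let $\ell_k(t) = t + \int_0^t |\dot y_k|\,d\tau$ be a strictly increasing Lipschitz function, set $S_k = \ell_k(T)$, and let $(\tilde t_k, \tilde y_k) = (t\circ \ell_k^{-1},\, y_k \circ \ell_k^{-1})$, which is a normalized space-time curve parametrized so that $|\tilde t_k'| + |\tilde y_k'| = 1$. Then $S_k = T + \|\dot y_k\|_1$ is bounded (since $\dot y_k \rightharpoonup^* \dot y$ in $M$ implies $\sup_k \|\dot y_k\|_1 < \infty$), the curves are uniformly Lipschitz, so after extending to a common interval and passing to a subsequence we get $(\tilde t_k, \tilde y_k) \to (\tilde t, \tilde y)$ uniformly with $\tilde t$ monotone, $\tilde t(0)=0$, $\tilde t(S)=T$. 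One then checks that this limit curve $(\tilde t,\tilde y)$, with controls recovered from $\tilde t',\tilde y'$, is a relaxed curve corresponding to $(y,\dot y)$: the relation $y = \tilde y \circ \tilde t^{-1}$ follows from $y_k \to y$ in $L^1$ together with uniform convergence of the reparametrized curves, and the measure identity $\int_0^T \phi\, d\dot y = \int_0^S \tilde y'(s)\,\phi(\tilde t(s))\,ds$ follows by passing to the limit in $\int_0^T \phi\, \dot y_k\, dt = \int_0^{S_k} \tilde y_k'(s)\,\phi(\tilde t_k(s))\,ds$ exactly as in the forward direction. This is where the footnote's caveat enters: to view the result as an ODE-compatible relaxed solution rather than just a curve one needs $\tilde A(\tilde t,\tilde y,\cdot,\cdot)$ to be invertible so that admissible controls $(\tilde v,\tilde u)$ producing the prescribed $(\tilde t',\tilde y')$ exist — absent that one restricts to the reachable curves on both sides and the argument goes through verbatim on those restricted sets.

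\textbf{Main obstacle.} The genuinely delicate point is the bookkeeping around the non-strict monotonicity of $\tilde t$: showing that $y$ is well-defined a.e., that the exceptional set $\{t : \tilde t^{-1}(t)$ not unique$\}$ is null, and — most importantly — that the approximating sequences ($\varepsilon_k$-thickened vertical parts in one direction, $\ell_k$-reparametrized graphs in the other) really converge in the $L^1 \times M$ topology defining $W^1_\mu$ with the \emph{correct} limit, so that the two characterizing identities are matched rather than merely some weak-$*$ limit. Everything else is the by-now-routine machinery of change of variables plus Arzelà–Ascoli that already appears in the proofs above.
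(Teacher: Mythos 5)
Your overall strategy matches the paper's in both directions: forward, you perturb $\tilde t$ to be strictly increasing and pass to the limit in $\int_0^T\phi\,\dot y_k\,dt=\int_0^{S_k}\tilde y'(s)\,\phi(\tilde t_k(s))\,ds$ (the paper uses the single global perturbation $\tilde t_k(s)=T(\tilde t(s)+s/k)/(T+S/k)$ rather than thickening each constancy interval, which avoids bookkeeping over countably many intervals but is otherwise the same idea); conversely, you attach associated space-time curves to the generating sequence $y_k$, normalize, get a uniform bound on $S_k$ from $\sup_k\norm[1]{\dot y_k}<\infty$ (the paper phrases this via the auxiliary energy $\int_0^T\abs{u}\,dt$ and \autoref{lem:uniformBounds}), and extract a uniform limit by Arzela--Ascoli before passing to the limit in the same change-of-variables identity.

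However, one step of your forward argument is wrong as stated. You claim that on a maximal constancy interval of $\tilde t$ one has $\tilde v=0$ a.e.\ and hence, by $1$-homogeneity of $\tilde A$, also $\tilde y'=0$, so that $\tilde y$ is constant there and $y$ is ``unambiguously defined.'' This is false and contradicts the entire purpose of the vertical relaxation: on a vertical part $\tilde v=0$ but $\tilde u\neq 0$, so $\tilde y'=\tilde A(\tilde t,\tilde y,0,\tilde u)$ is in general nonzero --- that is precisely where $y$ jumps and $\dot y$ acquires an atom. (The argument in \autoref{lem:uniqNormalization} that $\tilde t$ and $\tilde y$ are both constant requires $\max(\tilde v,\abs{\tilde u})=0$ on the interval, not merely $\tilde v=0$; and in the present proposition the footnote even decouples $\tilde y'$ from any control, so no such conclusion is available.) Your proof survives only because you also give the correct argument, which is the one the paper uses: each non-singleton preimage $\tilde t^{-1}(t)$ is an interval of positive length in $[0,S]$, so there are at most countably many such $t$, the exceptional set is Lebesgue-null, and $y(t)=\tilde y(\tilde t^{-1}(t))$ is therefore defined almost everywhere, which is all that an element of $L^1$ requires. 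Drop the constancy claim and keep only the countability argument.
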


\begin{proof}
 As $\tilde{t}$ is monotone, whenever there are $s_0 < s_1$ such that $\tilde{t}(s_0) = \tilde{t}(s_1)$ we have to have $\tilde{t}(s) = \tilde{t}(s_0)$ for all $s\in [s_0,s_1]$. But then $|\{s\in[0,S]:\tilde{t}(s)=\tilde{t}(s_0)\}| > 0$ so there can only be countably many such intervals. Thus $\tilde{t}^{-1}$ is unique almost everywhere and so $y$ is defined almost everywhere on $[0,T]$ and thus well defined. The derivative $\dot{y}$ is already uniquely defined by the condition.
 
 Now define
 \[\tilde{t}_k(s) : [0,S] \to [0,T], T\frac{\tilde{t}(s) + \frac{s}{k}}{T+S/k} .\]
 Then $\tilde{t}_k \to \tilde{t}$ uniformly. Furthermore $\tilde{t}_k' >0$ so there exists a function $y_k:[0,T] \to \R^n$ corresponding to $s\mapsto (\tilde{t}_k,\tilde{y})$. Now by change of variables and since $\tilde{y}$ and $\phi$ are Lipschitz
 \begin{align*}
  \int_0^T \dot{y}_k \phi dt = \int_0^S \tilde{y}'(s) \phi(\tilde{t}_k(s)) ds \to \int_0^S \tilde{y}'(s) \phi(\tilde{t}(s)) ds = \int_0^T \phi d \dot{y}
 \end{align*}
 so $\dot{y}_k \rightharpoonup^* \dot{y}$. Since $y_k(0) = y(0) = y_0$ this also implies $y_k \to y$ in $L^1([0,T];\R^n)$ and so $(y,\dot{y}) \in W^1_\mu([0;T];\R^n)$.

 For the converse take the sequence $y_k \to y$ from the definition of $W^1_\mu([0,T];\R^n)$. Let $\tilde{t}_k,\tilde{y}_k$ be associated relaxed curves defined on intervals $[0,S_k]$. We can assume those to be normalized in the sense that $\max(\tilde{t}_k',\abs{\tilde{y}_k'}) = 1$ almost everywhere, by taking $A(t,y,u) = u$ (and thus $\tilde{A}(\tilde{t},\tilde{y},\tilde{v},\tilde{u}) = \tilde{u}$) and applying \autoref{lem:uniqNormalization}. 
 
 Similarly, we temporarily choose the energy $\mathcal{F}(y,u) := \int_0^T \abs{u} dt = \int_0^T \abs{\dot{y}} dt$. Then as the $\dot{y}_k$ converge in the sense of measures and the energy corresponds to their total variation, it is in particular bounded along this sequence. But then the same holds true for the relaxed energy for the relaxed curves and we can apply \autoref{lem:uniformBounds} to get uniform bounds.\footnote{All of this can of course be done directly, without defining $A$ and $\mathcal{F}$, by repeating the arguments used before. However there is no new insight in this.} We then extend the curves to a common interval $[0,S]$ and extract limits and a converging subsequence, as in the proof of \autoref{thm:existConv}.
 
 Now $\tilde{y}_k \to \tilde{y}$ and $\tilde{t}_k \to \tilde{t}$ uniformly. But then for all $s \in [0,S]$ for which $\tilde{t}$ is injective we have
 \[y(\tilde{t}(s)) \leftarrow y_k(\tilde{t}(s)) = \tilde{y}_k(s) \to \tilde{y}(s)\]
 which is the first line. Furthermore we have by change of variables
 \[\int_0^{S_k} \tilde{y}_k'(s) \phi(\tilde{t}_k(s)) ds = \int_0^{S_k} \frac{d y_k}{dt}(\tilde{t}_k(s)) \frac{d\tilde{t}_k}{ds}(s) \phi(\tilde{t}_k(s)) ds = \int_0^T \dot{y}_k(t) \phi(t) dt.\]
 Here the right hand side converges to $\int_0^T \phi d\dot{y}$ by the weak* convergence of $y_k$ and the left hand side converges to $\int_0^S \tilde{y}'(s) \phi(\tilde{t}(s)) ds$ by the uniform convergence which proves the second line.
\end{proof}

\begin{remark}[On loss of information (part 1)]
 The non-uniqueness of the converse is related to the path that $y$ takes in the vertical parts. As the relaxed curve is just a curve in space-time, it contains the full information about this path in its trace. On the other hand, $(y,\dot{y}) \in W^1_\mu([0,T];\R^n)$ sees only the corresponding jump. It does not care, if $y$ moved along a straight line or took a longer detour. Even in an optimal control problem on $\R^n$, $y$ might not always take the straight line (see \autoref{ex:updown}). However, things get even more interesting if one thinks of problems where $y$ is restricted to a submanifold which is not simply connected such as $\S^1$. In this case the path taken also matters in a topological sense. We will discuss this in a bit more detail in \autoref{rem:lossOfInformation2}.
 
 Also note that this is the only source of non-uniqueness. If $\tilde{y}$ is assumed to have no vertical parts, or equivalently if $\tilde{t}$ is taken to be strictly monotonous, then the correspondence of $(y,\dot{y})$ and $(\tilde{t},\tilde{y})$ is one to one.
\end{remark}

We have thus seen how without oscillations the vertical relaxed curves $\tilde{y}$ can be embedded in $W^{1}_\mu$. As the derivative of $\tilde{y}$ is only used linearely, the change to generalized curves has no impact here and the result holds just the same. What is left is to discuss how the Young measure $\mu$ relates to the DiPerna-Majda measures used by Kružík and Roubíček. For this we recapitulate the definition.

Let $\gamma \R^k$ denote the compactification of $\R^k$ created by adding infinite directions.
A DiPerna-Majda measure on $[0,T]$ now consists of a positive Radon measure $\sigma \in M([0,T])$ and a   $\sigma$-measurable family of Radon measures $\mu_t \in M(\gamma \R^k)$. The general idea here is that $\mu_t$ behaves a bit like an ordinary Young measure, however allowing for concentrations using $\sigma$. Specifically a DiPerna-Majda measure $(\sigma,\mu_t)$ is generated by a bounded sequence $u_k \in L^p([0,T];\R^k)$ if for any $g\in C([0,T])$ and $w \in C(\gamma \R^k)$

\[\int_0^T g(t) w(u_k(t)) (1+\abs{u_k(t)}^p) dt \to \int_0^T g(t) \int_{\gamma \R^k} w(y) d\mu_t(y) d \sigma(t)\]
and any bounded sequence has a subsequence generating such a measure. It is related to our vertical relaxation by the following procedure.

\begin{proposition}
 Let $u_k \in L^1([0,T];\R^k)$ be a bounded sequence and $\tilde{v}_k,\tilde{u}_k \in L^\infty([0,S_k];\R^k)$ the corresponding normalized sequence of vertical relaxations. Let $\nu_s$ be the Young measure generated by $\tilde{v}_k,\tilde{u}_k$ and $\tilde{t}:[0,S]\to[0,T]$ the corresponding time. Then $u_k$ generates the DiPerna-Majda measure $(\sigma,\mu_t)$ where $\sigma := \tilde{t}_\star (\rho ds)$ is the pushforward of the measure $\rho(s) ds$ on $[0,S]$ and $\mu_t$ is the average of $\frac{1}{\rho}(1+\abs{.})\nu_s$ over all $s$ that map to the same time $t$. To be more precise, $\sigma$ and $\mu_t$ are defined such that for any $g \in C([0,T])$ and any $w \in C(\gamma\R^k)$ we have
 \begin{align*}
  \int_{[0,T]} g(t) d\sigma(t) &:= \int_{[0,S]} \rho(s) g(t(s)) ds &&\\
  \int_{\gamma \R^k} w(y)  d \mu_t(y) &:= \begin{cases}\int_{\R^k} \frac{1}{\rho(\tilde{t}^{-1}(t))} w(a/b)(b+\abs{a}) d\nu_{\tilde{t}^{-1}(t)}(a,b) &\text{ if $\tilde{t}^{-1}(t)$ is a point}\\
   \fint_{\tilde{t}^{-1}(t)} \frac{1}{\rho(s)} \int_{\R^k} w(a/b)(b+\abs{a}) d\nu_s(a,b) ds &\text{ if $\tilde{t}^{-1}(t)$ is an interval.}
  \end{cases}
 \end{align*} 
 where $\rho(s):=\int(\abs{a}+\abs{b})d\nu_s(a,b)$ is necessary to normalize $\mu_t$ to unit mass.
 Note that, as $\tilde{t}$ is monotone and surjective, the last two are the only two possibilities.
\end{proposition}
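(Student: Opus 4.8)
The plan is to transport the DiPerna--Majda test-integral on the left-hand side to the space-time side via the change of variables $t=\tilde t_k(s)$, pass to the limit with the $L^\infty$-Young measure tools of \autoref{subsec:genCurves}, and then recognise the limit as an integral against the stated pair $(\sigma,\mu_t)$ by disintegrating $\rho(s)\,ds$ along $\tilde t$. Fixing $g\in C([0,T])$, $w\in C(\gamma\R^k)$ (here the exponent is $p=1$), the goal is
\[\int_0^T g(t)\,w(u_k(t))\,(1+\abs{u_k(t)})\,dt \;\longrightarrow\; \int_0^T g(t)\int_{\gamma\R^k} w(y)\,d\mu_t(y)\,d\sigma(t).\]
Throughout I write $(\tilde v,\tilde u)$ for the variables of $\nu_s$, so that $\tilde u/\tilde v$ is the ``$a/b$'' and $\tilde v+\abs{\tilde u}$ the ``$b+\abs{a}$'' of the statement.

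First I would unfold the left side. For each fixed $k$ the normalized relaxation arises from the associated solution $\tilde t_k(s)=s$, $\tilde v_k\equiv1$, $\tilde u_k=u_k$ by reparametrizing with the inverse $\phi_k$ of $l_k(s)=\int_0^s\max(1,\abs{u_k(\tau)})\,d\tau$ (cf.\ the proof of \autoref{lem:uniqNormalization}); since $\max(1,\abs{u_k})\ge1$ this $\phi_k$ is strictly increasing, so $\tilde t_k$ is strictly monotone, $u_k(\tilde t_k(s))=\tilde u_k(s)/\tilde v_k(s)$ and $dt=\tilde v_k(s)\,ds$. Using $(1+\abs{\tilde u_k/\tilde v_k})\,\tilde v_k=\tilde v_k+\abs{\tilde u_k}$, the left side becomes $\int_0^{S_k} g(\tilde t_k(s))\,h(\tilde v_k(s),\tilde u_k(s))\,ds$ with $h(\tilde v,\tilde u):=(\tilde v+\abs{\tilde u})\,w(\tilde u/\tilde v)$. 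I would then record that $h$, extended by $h(0,0):=0$, is continuous and bounded by $2\norm[\infty]{w}$ on $[0,1]\times B_1(0)$: the map $(\tilde v,\tilde u)\mapsto\tilde u/\tilde v$ extends continuously to $([0,1]\times B_1(0))\setminus\{(0,0)\}$ as a map into $\gamma\R^k$ (sending $(0,\tilde u)$ to the point at infinity in direction $\tilde u/\abs{\tilde u}$), so $w(\tilde u/\tilde v)$ is continuous and bounded there, while $\tilde v+\abs{\tilde u}$ vanishes at the origin. Since $S_k=l_k(T)\le T+\norm[1]{u_k}$ is bounded along the $L^1$-bounded sequence, exactly as in \autoref{thm:existConv} I would extend every $(\tilde v_k,\tilde u_k)$ by $0$ beyond $S_k$ to a common interval $[0,S]$; there $h(\tilde v_k,\tilde u_k)=0$, so the integral is unchanged.

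Next I would pass to the limit. By hypothesis $(\tilde v_k,\tilde u_k)$ generates $\nu_s$, and since $\tilde t_k'=\tilde v_k$, \autoref{lem:YoungConvInt} (applied to the coordinate function) identifies $s\mapsto\int\tilde v\,d\nu_s$ as the weak-$\star$ limit of $\tilde t_k'$, whence $\tilde t_k\to\tilde t$ uniformly with $\tilde t'=\int\tilde v\,d\nu_s$. Replacing $g\circ\tilde t_k$ by $g\circ\tilde t$ changes the integral by at most $\norm[\infty]{g\circ\tilde t_k-g\circ\tilde t}\cdot2\norm[\infty]{w}\cdot S\to0$, and a weighted version of \autoref{lem:YoungConvInt} (approximate the continuous weight $g\circ\tilde t$ uniformly by step functions and apply the lemma on each step) yields
\[\int_0^{S} g(\tilde t_k(s))\,h(\tilde v_k(s),\tilde u_k(s))\,ds \;\longrightarrow\; \int_0^{S} g(\tilde t(s))\,\lambda_s(w)\,ds, \qquad \lambda_s(w):=\int h(\tilde v,\tilde u)\,d\nu_s(\tilde v,\tilde u).\]
Here $\lambda_s$ is the positive Radon measure on $\gamma\R^k$ obtained by pushing $(\tilde v+\abs{\tilde u})\,\nu_s$ forward under $(\tilde v,\tilde u)\mapsto\tilde u/\tilde v$, of total mass $\int(\tilde v+\abs{\tilde u})\,d\nu_s=\rho(s)$; moreover $\rho(s)\ge1$ a.e., since $\nu_s=\lim\delta_{(\tilde v_k(s),\tilde u_k(s))}$ is supported on $\{\max(\tilde v,\abs{\tilde u})=1\}\subset\{\tilde v+\abs{\tilde u}\ge1\}$.

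Finally I would identify $\int_0^{S} g(\tilde t(s))\,\lambda_s(w)\,ds$ with $\int_{[0,T]} g\,\mu_t(w)\,d\sigma$. As $\tilde t$ is monotone and surjective, $[0,S]$ decomposes into the set $J$ where $\tilde t$ is injective and the countably many (as in the proof of \autoref{prop:W1mu}) maximal nondegenerate intervals $I_n=\tilde t^{-1}(t_n)$ on which $\tilde t$ is flat; correspondingly $\sigma=\tilde t_\star(\rho\,ds)$ is non-atomic on $\tilde t(J)$ with atoms $\sigma(\{t_n\})=\int_{I_n}\rho(s)\,ds$. On $J$ the substitution $t=\tilde t(s)$ together with $\mu_{\tilde t(s)}=\rho(s)^{-1}\lambda_s$ gives $\int_{\tilde t(J)}g\,\mu_t(w)\,d\sigma=\int_J g(\tilde t(s))\,\lambda_s(w)\,ds$. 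On each $I_n$ the barycenter $\int\tilde v\,d\nu_s=\tilde t'=0$ forces $\tilde v=0$ $\nu_s$-almost surely, which with $\max(\tilde v,\abs{\tilde u})=1$ forces $\abs{\tilde u}=1$ $\nu_s$-almost surely, so $\rho(s)=\int\abs{\tilde u}\,d\nu_s=1$ a.e.\ on $I_n$; hence $\sigma(\{t_n\})=\abs{I_n}$, $\mu_{t_n}(w)=\fint_{I_n}\rho(s)^{-1}\lambda_s(w)\,ds=\abs{I_n}^{-1}\int_{I_n}\lambda_s(w)\,ds$, and the $n$-th term contributes $\sigma(\{t_n\})\,g(t_n)\,\mu_{t_n}(w)=\int_{I_n}g(\tilde t(s))\,\lambda_s(w)\,ds$. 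Summing over $J$ and the $I_n$ gives the identity, hence the desired convergence; that each $\mu_t$ is a probability measure and $(\sigma,\mu_t)$ an admissible $\sigma$-measurable DiPerna--Majda pair follows from $\lambda_s(\gamma\R^k)=\rho(s)$ together with the measurability of $s\mapsto\nu_s$ and of $t\mapsto\tilde t^{-1}(t)$. The one genuinely non-routine step is this last observation that $\rho\equiv1$ on the flat intervals of $\tilde t$: it is precisely what makes the interval-case in the definition of $\mu_t$ consistent with the disintegration of $\rho\,ds$ along $\tilde t$, and it is where the choice of normalization is essential.
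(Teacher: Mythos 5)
Your proof is correct and takes essentially the same route as the paper's: transport the DiPerna--Majda test integral to $[0,S_k]$ by the substitution $t=\tilde t_k(s)$, pass to the limit with \autoref{lem:YoungConvInt}, and push the result forward along $\tilde t$. The one substantive point where you go beyond the paper is your check that $\rho\equiv1$ on the flat intervals of $\tilde t$; this is genuinely needed for the interval-case formula defining $\mu_t$ to agree with the disintegration of the pushed-forward measure with respect to $\sigma$ (otherwise $\fint_{I_n}\rho(s)^{-1}(\cdots)\,ds$ and $(\int_{I_n}\rho\,ds)^{-1}\int_{I_n}(\cdots)\,ds$ would differ), and the paper's proof passes over this silently with ``push everything forward by the function $\tilde{t}$''.
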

 
\begin{proof} 
 Since $\tilde{v}_k,\tilde{u}_k$ are normalized, $\supp \nu_s$ can only include points $(a,b)$ for which $1\leq \abs{a}+\abs{b} \leq 2$. Thus $1\leq \rho(s) \leq 2$ and as a consequence the measures $\sigma$ and $\mu_t$ are well defined measures by the above relations. Furthermore per definition $\mu_t(\gamma\R^k)=1$ for all $t\in[0,T]$.
 
 To see that they form a DiPerna-Majda measure, we only need to check if they are generated by the sequence $u_k$. For this let $g\in C([0,T])$ and $w \in C(\gamma \R^k)$ be fixed. Then by the definition of the relaxation
 \begin{align*}
  \phantom{{}={}}\int_0^T g(t) w(u_k(t)) (1+\abs{u_k(t)}) dt &= \int_0^{S_k} g(\tilde{t}(s)) w\left(\frac{\tilde{u}_k(s)}{\tilde{v}_k(s)}\right) \left(1+\abs{\frac{\tilde{u}_{k}(s)}{\tilde{v}_k(s)}}\right) \tilde{v}_k(s) ds\\
  = \int_0^{S_k} g(\tilde{t}(s)) w\left(\frac{\tilde{u}_k(s)}{\tilde{v}_k(s)}\right) \left(\tilde{v}_k(s)+\abs{\tilde{u}_{k}(s)}\right) ds 
  &\to \int_0^S g(t(s)) \int_{\R^k} w\left(\frac{a}{b}\right)(b+\abs{a}) \,d \nu_s(a,b) ds
  \end{align*}
  where $w(a/0) = \lim_{b\searrow 0} w(a/b)$ is a well defined abuse of notation. Continuing we get
  \[=\int_0^S g(t(s)) \rho(s) \int_{\R^k} \frac{1}{\rho(s)} w\left(\frac{a}{b}\right)(b+\abs{a}) \,d \nu_s(a,b) ds. \]
  Now we push everything forward by the function $\tilde{t}$ to get, as per above definitions
  \[=\int_0^T g(t) \int_{\gamma\R^k} w(y) \, d\nu_t(y) \sigma(t) \qedhere\]
\end{proof}

\begin{remark}[On the loss of information (part 2)] \label{rem:lossOfInformation2}
 The above process is surjective. Any DiPerna-Majda measure is generated by a sequence, which in turn has at least a subsequence generating a vertically relaxed Young measure for which the above process will produce the initial DiPerna-Majda measure. It is however not injective. While the produced DiPerna-Majda measure is unique, different Young measures may generate the same DiPerna-Majda measure.
 
 The specific problem is that the information about the order in which values are taken in the vertical part is lost. Take a look back at example \autoref{ex:updown}.
 In the concentration around $t=1/2$, the value of $u$ first went towards $+\infty$ and then towards $-\infty$. We could have constructed a similar sequence with the opposite order. Both will converge to the same DiPerna-Majda measure but not the same space-time relaxation. In other words, the space-time relaxation is finer than the one via DiPerna measures, in the sense of convex compactifications. (See \cite{roubicekBook} for a general discussion)
 
 In this case both ways would have had the same result in the limit, i.e. the same energy and the same curve $y(t)$ for almost all $t$. However it is not hard to think of an example where the curve would end up in different final positions depending on the order in which $u$ takes its values.
\end{remark}

\section{Banach-space valued problems} \label{sec:Banach}

It is not fundamentally difficult to extend the preceding discussions to functions that take their values in general Banach spaces, i.e.\@ we can consider problems of the kind

\begin{align*}
 \text{Minimize } & \mathcal{F}(y,u) := \int_0^T f(t,y(t),u(t)) dt + g(y(T))\\
 \text{where } & \dot{y}(t) = A(t,y(t),u(t)) \forall t\in [0,T]\\
 &y(0) = y_0
\end{align*}

where $f: [0,T] \times X_1 \times X_2 \to \R$, $g:X_1 \to \R$ and $A: [0,T] \times X_1 \times X_2 \to X_1$ and $X_1$ and $X_2$ are Banach-spaces.

The key observations here are that the vertically relaxed problem is constructed mostly using simple algebraic transformations and that in the relaxed problem we only need to consider bounded functions $(\tilde{v},\tilde{u}):[0,S] \to \R^+ \times X_2$ and Lipschitz-curves $(\tilde{t},\tilde{y}): [0,S] \to [0,T] \times X_1$, both of which apart from the usual problems regarding compactness do not behave fundamentally different to their finite-dimensional counterparts\footnote{For a way to define Young measures on Banach-spaces, see for example the works of Ahmed \cite{ahmedPropertiesRelaxedTrajectories1983} and Fattorini \cite{fattoriniRelaxedControlsInfinite1991,fattoriniInfiniteDimensionalOptimization1999}, to which we will refer for details.}. As such a general theory will not offer much more insight into the problem and only overcomplicate things with notation. Instead, we will illustrate the usefulness of the approach in an example.

\begin{example}
Let us discuss as an example an adaptation of \autoref{ex:updown} into a PDE-problem. Specifically, we want to look at the classic heat equation on a circle, where we control the right hand side. So let $\T = \R/\Z$ and define $X_1 :=  \dot{W}^{2,2}(\T) = \{y\in W^{2,2}(\T): \int_\T y dx = 0\}$ and $X_2 := \dot{L}^2(\T):= \{u\in L^2(\T):\int_\T u dx= 0\}$ as spaces with zero mean. Now consider the differential equation
\[\begin{cases}\partial_t y = \Delta y + u & \text{ on } [0,T] \times \T\\ y(0,.) = y_0 = 0 &\text{ on } \T.\end{cases}\]
where $t\mapsto u(t,.), [0,T] \to \dot{L}^2(\T)$ is our control parameter.
If we take the energy to be
\[\mathcal{F}(y,u) := \int_0^T \norm[L^2]{u(t,.)}dt + \norm[L^2]{y(T,.)-y_f}^2\]
where $y_f \in \dot{W}^{2,2}(\T)$ is some fixed non-zero function, then this defines an optimal control problem. 

Intuitively the problem behaves as follows: We need to use $u$ to force $y$ to move towards $y_f$. However since the heat equation tends to undo our progress by moving back towards its average and the control parameter is rate independent, it is advantageous to do so at the very last moment.

Let us start with a lower estimate. Let $S$ be the solution-operator of the corresponding Cauchy problem, i.e. the linear operator such that $\partial_t (S(t)y_0) = \Delta (S(t)y_0)$ and $S(0)y_0 = y_0$ for all $y_0 \in L^2(\T)$. Then by Duhamel's principle, for any given $u:[0,T] \to \dot{L}^2(\T)$, the differential equation is solved by
\[y(t) := \int_0^t S(t-s) u(s) ds.\]
Now it is well known that the semigroup associated with the heat kernel is contracting, specifically in this case $\norm[L^2]{S(t)u_0} \leq \norm{u_0}$ with equality only if $t=0$ or $u_0$ is constant. But then
\begin{align} \label{eq:exBanach:lower}
\norm[L^2]{y(T)} \leq \int_0^T \norm[L^2]{S(T-s)u(s)} ds \leq \int_0^T \norm[L^2]{u(s)} ds
\end{align}
which gives us the lower bound
\[\mathcal{F}(y,u) \geq \norm[L^2]{y(T)} + \norm[L^2]{y(T)-y_f}^2.\]

We can also show that this bound is optimal. Fix $y_T\in \dot{W}^{2,2}(\T)$. We define the following sequence:

\[u_k(t,.) := \dot{\varphi}_k(t) y_T - \varphi_k(t) \Delta y_T \]
where $\varphi_k:[0,T] \to [0,1]$ monotone with $\varphi_k(0) = 0$ and $\varphi_k(T) = 1$.
Then a short calculation reveals that
\[y_k(t,.) := \varphi_k(t) y_T\]
solves the corresponding differential equation with $y_k(T,.) = y_T$. Now we have
\[\int_0^T \norm[L^2]{u_k(t)} dt \leq \int_0^T |\dot{\varphi_k}| \norm[L^2]{y_T} + \abs{\varphi_k} \norm[L^2]{\Delta y_T} dt = \norm[L^2]{y_T}+\norm[L^1]{\varphi_k} \norm[L^2]{\Delta y_k}\]
so if we pick $\varphi_k$ such that $\supp \varphi_k \subset [T-1/k,T]$, then
\[\mathcal{F}(y_k,u_k) \to \norm[L^2]{y_T} + \norm[L^2]{y_T-y_f}^2\]
which, as before, is in turn optimized by choosing $y_T = \frac{1}{2}y_f$.

A careful look at the arguments shows that the optimum can never be obtained. As the approximating sequence suggests, $u$ needs to concentrate at $t=T$. If $u$ is not equal to $0$ almost everywhere in $[0,T)$, then \eqref{eq:exBanach:lower} will be a strict inequality. But as a minimizer needs to satisfy $y(T)=\frac{1}{2}y_f\neq 0$, this is impossible with a classical u.

So now let us turn to the relaxation. The procedure is the same as before. We replace $u$ with $\frac{\tilde{u}}{\tilde{v}}$, rewrite everything in terms of $s$ and add the additional equation $\tilde{t}' = \tilde{v}$. This results in the equations
\begin{align*}
 \partial_s \tilde{y} = - \tilde{v} \Delta_x \tilde{y} + \tilde{u} &\text{ on } [0,S] \times \T \\
 \tilde{y}(0,.) = 0 &\text{ on } \T\\
 \tilde{t}' = \tilde{v} &\text{ on } [0,S], \tilde{t}(0) = 0, \tilde{t}(S) = T
\end{align*}

and the energy
\begin{align*}
 \tilde{\mathcal{F}}(\tilde{y},\tilde{u}) & := \int_0^S \norm[L^2]{\tilde{u}} ds + \norm[L^2]{\tilde{y}(S)-y_f}^2.
\end{align*}

In this relaxation, the minimizer concentrating at $t=T$ exists and is easy to construct. Let $y_T := \frac{y_f}{2}$ as before and define $L:= \norm[L^2]{y_T}$, $S:= T+L$. Now we pick
\begin{align*}
 (\tilde{v},\tilde{u})(s) := \begin{cases} (1,0) &\text{ for } 0 \leq s \leq T \\ \left(0,\frac{y_T}{L}\right) &\text{ for } T < s \leq T+L \end{cases}
\end{align*}

Then simple integration shows that our solution will look like
\begin{align*}
 (\tilde{t},\tilde{y})(s) = \begin{cases}
                             (s,0) &\text{ for } 0 \leq s \leq T \\
                             \left(T,(s-T)\frac{y_T}{L}\right) &\text{ for } T < s \leq T+L 
                            \end{cases}
\end{align*}
which indeed satisfies $y(T) = y_T$ and $\int_0^S \norm[L^2]{\tilde{u}} ds = \norm{y_T}$. It is also possible to see that this is indeed a minimizer by deriving a modified Duhamel-formula
\begin{align*}
 \tilde{y}(s) := \int_0^s S(\tilde{t}(s)-\tilde{t}(r)) \tilde{u}(r) dr
\end{align*}
and adapting the preceeding arguments.
\end{example}

\bibliographystyle{alpha}
\bibliography{bibliography} 

\begin{thebibliography}{Ahm83}

\bibitem[Ahm83]{ahmedPropertiesRelaxedTrajectories1983}
N.~U. Ahmed.
\newblock Properties of relaxed trajectories for a class of nonlinear evolution
  equations on a banach space.
\newblock {\em {SIAM} journal on control and optimization}, 21(6):953--967,
  1983.

\bibitem[DM87]{DiPernaMajda87}
Ronald~J. {DiPerna} and Andrew~J. Majda.
\newblock Oscillations and concentrations in weak solutions of the
  incompressible fluid equations.
\newblock {\em Communications in Mathematical Physics}, 108(4):667--689, 1987.

\bibitem[Fat91]{fattoriniRelaxedControlsInfinite1991}
Hector~O. Fattorini.
\newblock Relaxed controls in infinite dimensional systems.
\newblock In {\em Estimation and Control of Distributed Parameter Systems},
  pages 115--128. Springer, 1991.

\bibitem[Fat99]{fattoriniInfiniteDimensionalOptimization1999}
Hector~O. Fattorini.
\newblock {\em Infinite dimensional optimization and control theory},
  volume~54.
\newblock Cambridge University Press, 1999.

\bibitem[KR99]{KruzikRoubicek99}
Martin Kru{\v z}{\'\i}k and Tom{\'a}{\v s} Roub{\'\i}{\v c}ek.
\newblock Optimization problems with concentration and oscillation effects:
  relaxation theory and numerical approximation.
\newblock {\em Numerical functional analysis and optimization}, 20(5):511--530,
  1999.

\bibitem[Rou97]{roubicekBook}
Tom\'{a}\v{s} Roub\'{\i}\v{c}ek.
\newblock {\em Relaxation in optimization theory and variational calculus},
  volume~4 of {\em De Gruyter Series in Nonlinear Analysis and Applications}.
\newblock Walter de Gruyter \& Co., Berlin, 1997.

\bibitem[Sou72]{soucekSpacesFunctionsDomain1972}
Ji{\v r}{\'\i} Sou{\v c}ek.
\newblock Spaces of functions on domain {$\Omega$}, whose $k$-th derivatives
  are measures defined on {$\overline\Omega$}.
\newblock {\em {\v C}asopis pro p{\v e}stov{\'a}n{\'\i} matematiky},
  97(1):10--46, 1972.

\bibitem[You37]{youngGeneralizedCurvesExistence1937}
Laurence~C. Young.
\newblock Generalized curves and the existence of an attained absolute minimum
  in the calculus of variations.
\newblock {\em Comptes Rendus de la Societe des Sci. et des Lettres de
  Varsovie}, 30:212--234, 1937.

\end{thebibliography}

\end{document}